%
%
%
%
%

\documentclass[11pt]{amsart}
\usepackage{amssymb}

\usepackage[latin1]{inputenc}

\usepackage[all]{xy}

\usepackage{cases}

\usepackage{enumerate} \usepackage{float} 
\textwidth=6truein
\textheight=8.5truein \addtolength{\headheight}{6pt}
\addtolength{\evensidemargin}{-1cm} \hoffset-12mm \voffset-5mm

%
%
\newcommand{\Z}{{\mathbb Z}} 

 \newcommand{\F}{{\mathbb F}}

%
%

%
%
%

%

%

%

%
%
%
%



\newcommand{\rk}{\operatorname{rk}\nolimits}

\newcommand{\BG}{\operatorname{BG}\nolimits}

%
%

%
%
%
\newcommand{\A}{\ifmmode{\mathcal{A}}\else${\mathcal{A}}$\fi}
\newcommand{\B}{\ifmmode{\mathcal{B}}\else${\mathcal{B}}$\fi}
\newcommand{\C}{\ifmmode{\mathcal{C}}\else${\mathcal{C}}$\fi}
\newcommand{\D}{\ifmmode{\mathcal{D}}\else${\mathcal{D}}$\fi}
\newcommand{\G}{\ifmmode{\mathcal{G}}\else${\mathcal{G}}$\fi}
\newcommand{\I}{\ifmmode{\mathcal{I}}\else${\mathcal{I}}$\fi}
\newcommand{\J}{\ifmmode{\mathcal{J}}\else${\mathcal{J}}$\fi}
\newcommand{\K}{\ifmmode{\mathcal{K}}\else${\mathcal{K}}$\fi}
\renewcommand{\O}{\ifmmode{\mathcal{O}}\else${\mathcal{O}}$\fi}
\renewcommand{\P}{\ifmmode{\mathcal{P}}\else${\mathcal{P}}$\fi}
\newcommand{\U}{\ifmmode{\mathcal{U}}\else${\mathcal{U}}$\fi}
\newcommand{\M}{\ifmmode{\mathcal{M}}\else${\mathcal{M}}$\fi}
\newcommand{\N}{\ifmmode{\mathcal{N}}\else${\mathcal{N}}$\fi}
\newcommand{\Ss}{\ifmmode{\mathcal{S}}\else${\mathcal{S}}$\fi}
\newcommand{\T}{\ifmmode{\mathcal{T}}\else${\mathcal{T}}$\fi}
\newcommand{\Ff}{\ifmmode{\mathcal{F}}\else${\mathcal{F}}$\fi}
\newcommand{\Ll}{\ifmmode{\mathcal{L}}\else${\mathcal{L}}$\fi}
%


\newtheorem{Thm}{Theorem}[section]
\newtheorem{Conj}{Conjecture}[section]
\newtheorem{Prop}[Thm]{Proposition}

\newtheorem{Lem}[Thm]{Lemma}

\newtheorem*{Maintheorem}{Main Theorem}

\theoremstyle{definition}
\newtheorem{Defi}[Thm]{Definition}
\newtheorem{Rmk}[Thm]{Remark}

\theoremstyle{remark}
\newtheorem*{Not}{Notation}

\usepackage{color}



\theoremstyle{plain}


\title{Cohomology of finite $p$-groups of fixed nilpotency class}


\author{Oihana Garaialde Oca\~{n}a}
\address{Matematika Saila,
Euskal Herriko Unibertsitatearen Zientzia eta Teknologia Fakultatea,
 posta-kutxa 644, 48080 Bilbo, Spain
}
\email{oihana.garayalde@ehu.es}
\author{Jon Gonz\'alez-S\'anchez}
\address{Departamento de Matemáticas, 
Facultad de Ciencia y Tecnología de la Universidad del País Vasco,
 Apdo correos 644, 48080 Bilbao, Spain}
\email{jon.gonzalez@ehu.es}

\date{\today}

\begin{document}

\begin{abstract} Let $p$ be a prime number and let $c,d$ be natural numbers. Then, the number of possible isomorphism types for the mod $p$ cohomology algebra of a $d$-generated finite $p$-group of nilpotency class $c$ is bounded by a function depending only on $p$, $c$ and $d$.

\vspace{2.0mm}
{\bf Key words:} Cohomology of groups, finite $p$-groups, spectral sequences.

\vspace{2.0mm}
{\bf MSC2010}: 20J06, 20D15, 55T10, 55R20. 
\end{abstract}

\maketitle

\section{Introduction}

The cohomology of a finite group $G$ was initially defined as the cohomology of the associated classifying space $\BG$ \cite{MacLane}. The cohomology functor $H^*(\BG;\Z)$ translates the topological properties of $\BG$ into the algebraic properties of its cohomology group $H^*(\BG;\Z)$, where the latter category is easier to work in. These graded abelian groups can be used to distinguish non homotopy equivalent topological spaces. The above definition of cohomology of groups, however, does not give much information about the algebraic properties of the given group. 

The algebraic interpretations of low dimensional cohomology groups were the first evidence to which kind of algebraic properties such groups encoded. For instance, if $G$ acts trivially on $\Z$, then $H^1(G;\Z)$ is the group of all the group homomorphisms $G\to \Z$ and $H^2(G;\Z)$ classifies central extensions of $G$ by $\Z$ up to equivalence \cite[Chapter IV]{KSBrown82}. Later, Quillen proved the following remarkable result: given a finite group $G$, the Krull dimension of $H^*(G;\F_p)$ is the $p$-rank of $G$ \cite{Quillen}. Here, $\F_p$ denotes the finite field of $p$ elements with trivial $G$-action.

Our study focuses on mod $p$ cohomology rings $H^*(G;\F_p)$ of finite $p$-groups $G$. We search for algebraic properties of finite ($p$-) groups that are `encoded' by their mod $p$ cohomology rings. More precisely, we look for infinitely many isomorphism classes of finite $p$-groups $\{G_i\}_{i\in I}$ that share a common group property and that give rise to only finitely many isomorphism classes of mod $p$ cohomology rings in $\{H^*(G_i;\F_p)\}_{i\in I}$. The following is the main result of the paper. 

\begin{Maintheorem}\label{MainTheorem} Let $p$ be a prime number and let $c$ and $d$ be natural numbers. Then, the number of possible isomorphism types for the mod $p$ cohomology algebra of a $d$-generated $p$-group of nilpotency class $c$ is bounded by a function depending only on $p$, $c$ and $d$.
\end{Maintheorem}

This work has been inspired by the following striking result of J.~F.~Carlson.

\begin{Thm}\cite[Theorem 5.1]{Carlson} Let $m$ be an integer. Then, there are only finitely many isomorphism types of cohomology algebras in $\{H^*(G_i;\F_2)\}$, where $\{G_i\}$ are $2$-groups of coclass $m$.
\end{Thm}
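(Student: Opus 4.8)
The statement concerns $2$-groups of \emph{fixed coclass} $m$, whose nilpotency class grows without bound as the order increases; consequently the inductive control of nilpotency class and generator number that is available for groups of bounded class does not apply here, and one must instead exploit the structure theory of groups of fixed coclass. The plan is the following. By the coclass theorem for $p=2$ (Leedham-Green and, independently, McKay), the coclass graph $\mathcal{G}(2,m)$ consists of finitely many coclass trees together with finitely many sporadic groups; the sporadic groups contribute only finitely many cohomology algebras and may be discarded. Each coclass tree $T$ possesses an infinite mainline whose vertices are quotients $G_n=P/U_n$ of a single infinite pro-$2$-group $P$ of coclass $m$, and $P$ is a $2$-adic space group: it sits in an extension $1\to L\to P\to Q\to 1$ with $L\cong\dosadic^{\,d}$ a free $\dosadic$-module of finite rank $d$ and $Q$ a \emph{fixed} finite $2$-group acting through a fixed representation $\rho\colon Q\to\GL_d(\dosadic)$. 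Taking the $Q$-stable cofinal filtration $U_n=2^nL$, each mainline group fits into an extension $1\to A_n\to G_n\to Q\to 1$ with $A_n=L/2^nL\cong(\Z/2^n)^d$ a finite abelian $2$-group of rank $d$. Since the branch structure along the mainline repeats periodically, by the known periodicity of coclass trees for $p=2$ (Eick--Leedham-Green, du Sautoy), it suffices to bound the number of isomorphism types of cohomology algebras occurring along each mainline and within one period of its branches.

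For each $n$ I would analyse the Lyndon--Hochschild--Serre spectral sequence
\[
E_2^{s,t}(n)=H^s\!\bigl(Q;\,H^t(A_n;\F_2)\bigr)\ \Longrightarrow\ H^{s+t}(G_n;\F_2).
\]
The decisive point is that the mod-$2$ cohomology ring of a finite abelian $2$-group is insensitive to the exponents of its cyclic factors: $H^*(\Z/2^k;\F_2)\cong\F_2[x]\otimes\Lambda(y)$ with $|y|=1$ and $|x|=2$ for every $k\ge 2$. Hence for $n\ge 2$ the graded ring $H^*(A_n;\F_2)$ is $\F_2[x_1,\dots,x_d]\otimes\Lambda(y_1,\dots,y_d)$, independent of $n$ up to isomorphism. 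As $Q$ is fixed and the $Q$-action on $H^*(A_n;\F_2)$ is induced by the single representation $\rho$ reduced modulo $2$, the $Q$-algebra structure of $H^*(A_n;\F_2)$, and therefore the bigraded algebra $E_2^{*,*}(n)=H^*(Q;H^*(A_n;\F_2))$, also stabilizes. Thus the family $\{E_2^{*,*}(n)\}_n$ realizes only finitely many isomorphism types.

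It remains to pass from the $E_2$-page to the abutment uniformly in $n$, and this is where I expect the main difficulty to lie. The isomorphism type of $H^*(G_n;\F_2)$ as a graded algebra is determined not by the $E_2$-page alone but by the higher differentials $d_r$ and by the multiplicative extensions that reassemble the associated graded ring; to conclude I must show that these data are eventually constant in $n$. The tools I would use are the morphisms of spectral sequences induced by the quotient maps $G_{n+1}\twoheadrightarrow G_n$, together with the limiting continuous-cohomology spectral sequence of the compact $2$-adic analytic group $P$, whose cohomology $H^*_c(P;\F_2)$ is finite-dimensional by Poincar\'e duality in dimension $d$. Over the polynomial Noether-normalization subalgebra generated by the degree-$2$ classes $x_1,\dots,x_d$, each $H^*(G_n;\F_2)$ is a finitely generated free module, and the assertion to be proved is that its rank data, the differentials, and the extension constants are governed solely by the fixed pair $(Q,\rho)$ and by the stable limit $H^*_c(P;\F_2)$. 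The hard part is precisely this stabilization: one must convert the periodicity of the coclass tree into a genuine periodicity of the cohomology algebra, controlling the spectral sequence beyond $E_2$ uniformly in $n$. Granting it, only finitely many graded-algebra isomorphism types of $H^*(G_n;\F_2)$ arise along each of the finitely many mainlines, which yields the theorem.
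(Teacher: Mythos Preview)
First, note that the present paper does not give its own proof of this statement: it is quoted as Carlson's Theorem~5.1 and used only as motivation.  So the relevant comparison is between your outline and Carlson's original argument, whose two technical ingredients (his Proposition~3.1 and Lemma~3.2) are precisely the ones borrowed in Section~\ref{sec: countingarguments} of this paper.

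Your proposal has a genuine gap, and you locate it yourself: after showing that the $E_2$-page of the LHS spectral sequence for $1\to A_n\to G_n\to Q\to 1$ stabilises as a bigraded $Q$-algebra, you need the higher differentials and the multiplicative extension data to stabilise as well, and you do not prove this.  Appealing to the continuous cohomology of the pro-$2$ space group $P$ and to the comparison maps between the various spectral sequences does not by itself pin down the $d_r$ for finite $n$; one still has to exhibit, uniformly in $n$, enough permanent cycles of bounded degree to force collapse.  A second, related gap is the reduction to one period of branches: periodicity of the coclass tree (Eick--Leedham-Green, du Sautoy) is a statement about the graph and about certain presentation data, not about mod~$2$ cohomology.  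The twig groups in different periods are non-isomorphic groups of different orders, so ``it suffices to handle one period'' requires exactly the cohomological periodicity you are trying to establish.

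Carlson's route sidesteps both issues.  He does not use the tree structure or any limiting argument.  Instead he exploits two consequences of the coclass theorems for $p=2$: the rank of $G$ is bounded in terms of $m$, and $G$ contains a normal abelian subgroup $A$ of index bounded in terms of $m$.  From $A$ one produces, via the Evens norm, a class $\zeta\in H^{2|G:A|}(G;\F_2)$ whose restriction to a fixed central $C\cong\Z/2$ is non-zero; this is exactly the mechanism reproduced in the proof of Theorem~\ref{thm:cohringsfromquotient} above.  That single permanent cycle forces the LHS spectral sequence of $1\to C\to G\to G/C\to 1$ to collapse by page $2|G:A|+1$, a page bounded purely in terms of $m$.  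A finiteness count over a polynomial subalgebra of permanent cycles (Carlson's Proposition~3.1, again as in Theorem~\ref{thm:cohringsfromquotient}) then bounds the number of possible $H^*(G;\F_2)$ in terms of $H^*(G/C;\F_2)$ and $m$.  The point is that this is a hard bound on the number of ring structures obtainable from a given $E_2$-page, with no stabilisation hypothesis required.  Your approach, by contrast, tries to show the spectral sequences are eventually \emph{equal}; Carlson only shows there are boundedly many of them.
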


The coclass of a finite $p$-group of size $p^n$ and nilpotency class $c$ is $m=n-c$. In the same paper, J.~F.~Carlson conjectures that the analogous result should hold for the $p$ odd case \cite[Conjecture 6.1]{Carlson} and this conjecture is supported by the classification of finite $p$-groups by their coclass by Leedham-Green \cite{LeedGreen}. There have been several partial proofs of the above conjecture: B. Eick and D. J. Green made the first steps by considering Quillen categories of groups \cite{BettinaDavid2015} and later, the authors of this paper together with Antonio D\'iaz Ramos, made a considerable progress to proving the conjecture \cite{DGG}. However, the conjecture has not been proved in its totality yet.

The dual problem of this conjecture was studied in \cite{DGG1}, where it is proved that the number of isomorphism types of cohomology algebras of $d$-generated finite $p$-groups of nilpotency class smaller than $p$ is bounded by a function that depends only on $p$ and $d$ (see \cite[Theorem 5.1]{DGG1}). The proof of this result is based on three keystones: the Lazard Correspondence, the description of the cohomology ring of powerful $p$-central $p$-groups with the $\Omega$-extension property (see \cite{Weigel}) and some counting arguments in cohomology algebras using spectral sequences (see \cite{Carlson},\cite{DGG}).

This paper is indeed a continuation to \cite{DGG1} and the novelty in the proof of the Main Theorem (compared to \cite[Main Theorem]{DGG1}) is that once the nilpotency class of a finite $p$-group is bigger than $p$, there is no an analogue of the Lazard Correspondence. Instead, we prove that every $d$-generated $p$-group of nilpotency class $c$ contains a powerful $p$-central subgroup with the $\Omega$-extension property (see Proposition \ref{prop:GpOmega} and Lemma \ref{lem:4central}). The cohomology of such $p$-groups is very well-understood (see Theorems \ref{Thm:Weigel} and \ref{Thm:Weigelp2}) and in fact, the  number of their isomorphism types is bounded in terms of $p$ and $d$.  In the last part, using the counting argument result in Section \ref{sec: countingarguments}, we prove the Main Theorem (compare Theorem \ref{Thm: mainresult}).

\begin{Not}\label{Not: reducedandnilpotentcohomology}
Let $G$ be a group, $G^{p^k}$ denotes the subgroup generated by the $p^k$ powers of $G$. The {\it reduced} mod $p$ cohomology ring $H^*(G;\F_p)_{\text{red}}$ is the quotient $H^*(G;\F_p)/nil(H^*(G;\F_p))$, where $nil(H^*(G;\F_p))$ is the ideal of all nilpotent elements in the mod $p$ cohomology. For groups $H,K$, we will write $[H,K]= \langle [x,y]=x^{-1}y^{-1}xy\; |\; x\in H, y\in K\rangle$ to denote the group commutator. 

We say that a function $k=k(a,b,\ldots,A,B,\ldots )$ is $(a,b,\ldots)$-bounded if there exist a function $f$ depending on $(a,b,\ldots )$ such that $k\leq f(a,b\ldots )$. The rank of a $p$-group $G$ is the sectional rank, that is, 
\[
\rk(G)=\text{max} \{d(H)\; |\; \text{for all} \; H\leq G\},
\]
where $d(H)$ is the minimal number of generators of $H$.
\end{Not}

\textbf{Acknowledgements:} The authors were supported by grants, MTM2014-53810-C2-2-P and MTM2017-86802-P, from the Spanish Ministry of Economy and Competitivity and by the Basque Government, grants IT753-13, IT974-16.  Oihana Garaialde Oca\~na was also supported by a postdoctoral grant from the Basque Government, POS$\underline{\ }2017\underline{\ }2\underline{\ }0031$.

\section{Powerful $p$-central $p$-groups}

Following \cite{Weigel}, we define powerful $p$-central groups with the $\Omega$-extension property. We also recall a result about the mod $p$ cohomology algebra of such $p$-groups. Recall that for a $p$-group $G$, we denote $\Omega_1(G)=\langle g\in G\mid g^p=1\rangle$ and  similarly, $\Omega_2(G)=\langle g\in G\mid g^{p^2}=1\rangle$.

\begin{Defi}\label{defi: powerfulpcentralomega} Let $p$ be a prime number and let $G$ be a $p$-group. Then, 
\begin{enumerate}
\item $G$ is \emph{powerful} if $[G,G] \leq G^p$ for $p$ odd and, if $[G,G]\leq G^4$ for $p=2$.
\item $G$ is \emph{$p$-central} if $\Omega_1(G)\leq Z(G)$ for $p$ odd. For $p=2$, we say that $G$ is {\it $4$-central} if $\Omega_2(G)\leq Z(G)$ and moreover, if $\Omega_2(G)=(\Z/4\Z)^r$ for some $r\in \mathbb{N}$, we say that $G$ is {\it $4^*$-central}.
\item $G$ has the \emph{$\Omega$-extension property} ($\Omega$EP for short) if there exists a $p$-central group $H$ such that $G= H/ \Omega_1(H)$.
\end{enumerate}
\end{Defi}

An easy computation shows that if $G$ has the $\Omega$EP, then $G$ is $p$-central. We finish this subsection by describing the 
mod $p$ cohomology algebra of a powerful $p$-central $p$-group with the $\Omega$EP and of a $4^*$-central $2$-group.

\begin{Thm}\label{Thm:Weigel}Let $p$ be an odd prime, let $G$ be a powerful $p$-central $p$-group with the $\Omega$EP and let $d$ denote the $\F_p$-rank of $\Omega_1(G)$. Then,
\begin{itemize}
\item[(a)] $H^*(G;\F_p)\cong \Lambda(y_1, \dots, y_d) \otimes \F_p[x_1, \dots, x_d]$ with $|y_i|=1$ and $|x_i|=2$,
\item[(b)] the reduced restriction map $j_{\text{red}}: H^*(G;\F_p)_{\text{red}} \to H^*(\Omega_1(G);\F_p)_{\text{red}}$ is an isomorphism.
\end{itemize}
\end{Thm}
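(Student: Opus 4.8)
The plan is to combine Quillen's $F$-isomorphism theorem with an inductive analysis of the Lyndon--Hochschild--Serre spectral sequence of a canonical central extension. Write $A=\Omega_1(G)$. Since $G$ is $p$-central, $A\leq Z(G)$ and every element of order $p$ is central, so $A$ is elementary abelian and is the \emph{unique} maximal elementary abelian subgroup of $G$; in particular the $p$-rank of $G$ equals $d=\rk(A)$ and the Weyl group $N_G(A)/C_G(A)$ is trivial. By Quillen's stratification theorem the reduced restriction map $j_{\mathrm{red}}\colon H^*(G;\F_p)_{\mathrm{red}}\to H^*(A;\F_p)_{\mathrm{red}}=\F_p[v_1,\dots,v_d]$ is therefore injective and an $F$-isomorphism. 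The content of part (b) is to upgrade this to an honest isomorphism, i.e.\ to show that the polynomial generators $v_i$ themselves, not merely their $p^k$-th powers, lie in the image.

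First I would set up the structural skeleton coming from powerfulness and the $\Omega$EP. Because $G$ is powerful and $p$ is odd, $\Phi(G)=G^p$ and $H^1(G;\F_p)\cong(G/G^p)^*$; the regularity built into the $\Omega$EP (the power map $x\mapsto x^p$ and the identification $G/\Omega_1(G)\cong G^p$) forces $d(G)=\rk(\Omega_1(G))=d$, so I may choose a basis $y_1,\dots,y_d$ of $H^1(G;\F_p)$. The degree-$2$ generators $x_i$ are \emph{not} the Bocksteins $\beta(y_i)$ (these typically vanish, since the $y_i$ lift to characters of order larger than $p$); instead I would take $x_i$ to be the mod-$p$ reductions of the integral classes in $H^2(G;\Z)$ dual to a basis of the $p$-torsion of $G^{\mathrm{ab}}$. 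The key structural lemma, which I would extract from the power-map description of these groups, is that $A$ injects appropriately into $G^{\mathrm{ab}}$, so that these characters detect $A$ and the $x_i$ restrict to the standard polynomial generators $v_i$ of $H^*(A;\F_p)$. Granting this, $\im(j_{\mathrm{red}})$ contains every $v_i$, hence equals $\F_p[v_1,\dots,v_d]$, and part (b) follows.

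For part (a) I would argue by induction on $|G|$ via the central extension $1\to A\to G\to Q\to 1$ with $Q=G/\Omega_1(G)$. Since $G$ is $p$-central, $Q$ has the $\Omega$EP by definition, $Q$ is powerful as a quotient of a powerful group, and the same power-map regularity gives that $Q$ is $p$-central with $\rk(\Omega_1(Q))=d$; thus the inductive hypothesis yields $H^*(Q;\F_p)\cong\Lambda(y_1',\dots,y_d')\otimes\F_p[x_1',\dots,x_d']$. In the Lyndon--Hochschild--Serre spectral sequence
\[
E_2^{s,t}=H^s(Q;\F_p)\otimes H^t(A;\F_p)\Longrightarrow H^{s+t}(G;\F_p),
\]
I would identify the transgression $\tau\colon H^1(A;\F_p)=\langle u_1,\dots,u_d\rangle\to H^2(Q;\F_p)$ with the extension class, which the power structure forces to carry $u_i$ to the polynomial generators $x_i'$ of $H^*(Q;\F_p)$. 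Consequently the $u_i$ and the $x_i'$ cancel in pairs, the spectral sequence collapses, and the surviving classes $y_1',\dots,y_d'$ (degree $1$) and $v_1,\dots,v_d$ (degree $2$) reassemble into $\Lambda(y_1,\dots,y_d)\otimes\F_p[x_1,\dots,x_d]$, with the $x_i$ matched to $v_i$ as in part (b).

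The main obstacle is the precise control of the spectral-sequence differentials in part (a): one must show that $\tau$ behaves exactly as in the elementary abelian model, that there are no further differentials, and that the multiplicative extensions are trivial, so that no unexpected classes survive. This is exactly the point where the full force of ``powerful, $p$-central, with the $\Omega$EP'' is needed: these hypotheses guarantee that the extension is regular enough (via the power map and the higher Bocksteins relating the $y_i$ to the $x_i$) that the spectral sequence degenerates into the same shape as for $(\Z/p)^d$, rather than merely having the same Poincar\'e series. I would expect the bookkeeping of these higher Bocksteins to be the most delicate part of the argument.
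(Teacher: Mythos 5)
The paper offers no argument of its own here: Theorem \ref{Thm:Weigel} is quoted directly from Weigel (\emph{loc.\ cit.}, Theorem 2.1 and Corollary 4.2), so your proposal has to stand on its own, and it does not. Your opening step is fine: since $G$ is $p$-central, $A=\Omega_1(G)$ is the unique maximal elementary abelian subgroup, it is central, and Quillen's theorem makes $j_{\text{red}}$ an injective $F$-isomorphism onto $\F_p[v_1,\dots,v_d]$. The gap is in upgrading $F$-surjectivity to surjectivity. You propose to produce the degree-two generators as mod $p$ reductions of integral classes in $H^2(G;\Z)$. For a finite group $H^2(G;\Z)\cong \Hom(G,\Q/\Z)$ via the Bockstein, so every such class is inflated from $G^{\mathrm{ab}}$ and restricts to zero on any subgroup of $\Omega_1(G)\cap [G,G]$. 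But for \emph{every} nonabelian finite $p$-group one has $[G,G]\cap Z(G)\neq 1$, hence $\Omega_1(G)\cap[G,G]\neq 1$; so your ``key structural lemma'' that $A$ injects into $G^{\mathrm{ab}}$ fails in exactly the cases where the theorem has content, and the classes you construct can never hit the polynomial generators of $H^*(A;\F_p)$ corresponding to directions of $A$ inside $[G,G]$. Part (b) is therefore not established by this route; the degree-two generators must be produced by a genuinely different mechanism.

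The induction for part (a) has a parallel error. Take $G=\Z/p^2\times\Z/p$, which is powerful, $p$-central and has the $\Omega$EP with $d=2$; then $Q=G/\Omega_1(G)\cong\Z/p$ and $\rk(\Omega_1(Q))=1\neq d$. So the assertion that the power-map regularity forces $\rk(\Omega_1(Q))=d$ is false, and the transgression $H^1(A;\F_p)\to H^2(Q;\F_p)$ cannot carry the $u_i$ bijectively onto polynomial generators of $H^*(Q;\F_p)$: in general only some of the $u_i$ transgress nontrivially, and the survivors must supply the missing exterior and polynomial generators of $H^*(G;\F_p)$. Hence the ``cancel in pairs and collapse'' picture is not the right mechanism even in the abelian case. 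On top of this, the collapse requires controlling $d_3(v_i)=-\beta(\tau(u_i))$ (Kudo transgression) and the higher Bockstein structure of $H^*(Q;\F_p)$, which you flag as the delicate point but do not carry out; that is precisely where the hypotheses ``powerful, $p$-central, $\Omega$EP'' must actually be used, and it is the content supplied by the cited source rather than by your outline.
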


\begin{proof}
See \cite[Theorem 2.1 and Corollary 4.2]{Weigel}
\end{proof}

\begin{Thm}\label{Thm:Weigelp2} Let $G$ be a $4^*$-central $2$-group and let $d$ denote the $\F_2$-rank of $\Omega_2(G)$. Then, the following are equivalent:
\begin{itemize}
\item[(a)] $H^*(G;\F_p)_{\text{red}} \cong \F_2[x_1, \dots, x_d],$ where $|x_i|=2$ for all $i=1, \dots, d$.
\item[(b)] $G$ has the $\Omega$EP.
\end{itemize}
\end{Thm}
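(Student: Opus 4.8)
The plan is to reduce both implications to a single statement about the restriction map to the central subgroup $\Omega_2(G)$. Since $G$ is $4^*$-central, $\Omega_2(G) = (\Z/4\Z)^d$ is central, and every element of order $2$ lies in it, so $\Omega_1(G) = \Omega_1(\Omega_2(G)) = (\Z/2\Z)^d$ is the unique maximal elementary abelian subgroup and is central. Quillen's stratification theorem then shows that the restriction $H^*(G;\F_2)_{\text{red}} \to H^*(\Omega_1(G);\F_2) = \F_2[u_1,\dots,u_d]$ ($|u_i|=1$) is injective, factoring through $H^*(\Omega_2(G);\F_2)_{\text{red}} = \F_2[x_1,\dots,x_d]$ via the local model $x_i \mapsto u_i^2$; since the latter map is injective, the restriction $r\colon H^*(G;\F_2)_{\text{red}} \to \F_2[x_1,\dots,x_d]$ is itself injective for every $4^*$-central $G$. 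Comparing Hilbert series, (a) holds if and only if $r$ is an isomorphism, that is, if and only if $r$ is surjective. Thus the theorem amounts to proving that $G$ has the $\Omega$EP if and only if the degree-two generators $x_i$ of $H^*(\Omega_2(G);\F_2)_{\text{red}}$ lift to $H^*(G;\F_2)$.

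For (b) $\Rightarrow$ (a), I would use the $\Omega$EP presentation $G = H/\Omega_1(H)$ with $\Omega_1(H) \leq Z(H)$ to form the central extension
\[
1 \longrightarrow \Omega_1(H) \longrightarrow H \longrightarrow G \longrightarrow 1,
\]
where $\Omega_1(H) = (\Z/2\Z)^s$. Tracking the Bockstein spectral sequence of $G$ against this extension (equivalently, the transgression in its Lyndon--Hochschild--Serre spectral sequence), I expect the higher Bocksteins of the degree-one classes coming from $\Omega_2(G)$ to produce exactly the classes whose restrictions are $x_1,\dots,x_d$; the hypothesis $\Omega_1(H)\leq Z(H)$ is what should force these classes to be non-nilpotent, and hence to lift the polynomial generators, giving surjectivity of $r$. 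This is the exact $p=2$ analogue of the odd-primary statement recorded in Theorem \ref{Thm:Weigel}, and I would mirror Weigel's bookkeeping with $\Omega_2$ playing the role of $\Omega_1$ and with second Bocksteins replacing first Bocksteins.

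For the converse (a) $\Rightarrow$ (b), the strategy is to reverse this construction. Assuming $r$ is an isomorphism, the non-nilpotent generators $x_i$ support higher Bocksteins detecting integral lifts of the degree-one classes; I would use these lifts as the $k$-invariants of a central extension of $G$ by $\Omega_1(G) = (\Z/2\Z)^d$, defining a candidate group $H$, and then check that $H$ satisfies $\Omega_1(H) \leq Z(H)$ and $H/\Omega_1(H) \cong G$. The point is that prescribing the $x_i$ as the extension class raises the order of each central involution of $G$ from $2$ to $4$, so that $\Omega_1(H)$ collapses precisely to the kernel being factored out.

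The step I expect to be hardest is the surjectivity of $r$ in (b) $\Rightarrow$ (a): showing that the transgressed (higher-Bockstein) classes are non-nilpotent and generate all of $H^*(G;\F_2)_{\text{red}}$, with no obstruction from higher differentials. This is precisely where $p=2$ departs from the odd case of Theorem \ref{Thm:Weigel}, and where the hypothesis that $\Omega_2(G)$ is homocyclic of exponent exactly $4$ is essential, since it pins down the local model $x_i \mapsto u_i^2$ (a second Bockstein) rather than a higher Frobenius power that would arise for larger exponent.
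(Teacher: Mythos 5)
The paper does not actually prove this statement: its ``proof'' is a citation of \cite[Theorem 2.1]{Weigel}, so what you are really attempting is a reconstruction of Weigel's theorem. Your opening reduction is sound and is the right frame: $\Omega_1(G)=(\Z/2\Z)^d$ is the unique maximal elementary abelian subgroup and is central, so by Quillen the reduced restriction is injective; it factors through $H^*(\Omega_2(G);\F_2)_{\text{red}}\cong\F_2[x_1,\dots,x_d]$ with local model $x_i\mapsto u_i^2$; and the Hilbert-series comparison correctly converts the theorem into the assertion that this injection is onto if and only if $G$ has the $\Omega$EP. The gap is that neither implication is then proved -- both are phrased as expectations (``I expect'', ``should force'') -- and the hedged step is the entire content of the theorem.

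Moreover, the mechanism you propose for (b)$\Rightarrow$(a) fails as stated. The degree-one classes of $\Omega_2(G)$ need not extend to $H^1(G;\F_2)$ at all (already for $G=\Z/8$ one has $\Omega_2(G)=\Phi(G)$, so no nontrivial homomorphism $\Omega_2(G)\to\Z/2$ extends to $G$), so there is nothing on which to evaluate a higher Bockstein. What the $\Omega$EP actually supplies is the class $\epsilon\in H^2(G;\Omega_1(H))$ of the central extension $1\to\Omega_1(H)\to H\to G\to 1$; its restriction to $\Omega_1(G)$ is governed by the squaring map $\bar{h}\mapsto h^2$ (which takes values in $\Omega_1(H)$ and vanishes only at $1$, since $\Omega_1(H)$ contains every involution of $H$) together with the commutator pairing on the preimage of $\Omega_1(G)$. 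Proving surjectivity means showing that the components $\lambda_*\epsilon$, $\lambda\in\Omega_1(H)^*$, restrict to classes spanning $\langle u_1^2,\dots,u_d^2\rangle$ while the mixed terms $u_iu_j$ coming from nontrivial commutators cause no obstruction; that is exactly where Weigel works and where your sketch stops. In the converse direction, building $H$ as the central extension with $k$-invariant $(\xi_1,\dots,\xi_d)$, with $\xi_i$ lifting $x_i$, does force every involution of $H$ into the kernel (the restricted class is the anisotropic form $\sum_i u_i^2$), so $\Omega_1(H)$ is the kernel and is central; but the witness the paper uses for the $\Omega$EP at $p=2$ is a $4$-central group (see the proof of Proposition \ref{prop:GpOmega}(b)), and centrality of $\Omega_2(H)=\pi^{-1}(\Omega_1(G))$ requires the commutator pairing determined by your chosen lifts to vanish -- a condition you neither verify nor arrange. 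Both directions therefore still need their key computation.
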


\begin{proof}
See \cite[Theorem 2.1]{Weigel}.
\end{proof}

\section{$p$-groups of fixed nilpotency class}\label{sec: resultspowerful}

The results in this section give a preliminary setting for the proof of the Main Theorem in Section \ref{sec: Mainproof}. We start by proving an easy lemma about $p$-groups of fixed nilpotency class $c$ and rank $r$. We will show that such $p$-groups have a descending normal series of bounded length $h(r,c)$. Then, the proof of the Main theorem will proceed by induction on such a series.

\begin{Lem}\label{lem: normalseries}
Let $p$ be a prime number and let $c,r$ be positive integers. Let $G$ be a finite $p$-group of nilpotency class $c$ and rank $r$. Then, there exists a series of normal subgroups
\[
G=G_0\supseteq G_1 \supseteq \dots \supseteq G_h=\{1\},
\]
such that
\begin{enumerate}
\item[(a)] $G_i\unlhd G$, $G_i/G_{i+1}$ is a cyclic $p$-group,
\item[(b)] if $G/G_{i+1}$ has nilpotency class $l$, then $G_i\leq \gamma_l(G)$ and, in particular, $G_i/G_{i+1}$ is central in $G/G_{i+1}$,
\item[(c)] $h=h(c,r)\leq cr$ is a number that depends only on $c$ and $r$.
\end{enumerate}
\end{Lem}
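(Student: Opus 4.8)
The plan is to refine the lower central series $G=\gamma_1(G)\supseteq \gamma_2(G)\supseteq \dots \supseteq \gamma_{c+1}(G)=\{1\}$, which already has exactly $c$ nontrivial factors because $G$ has class $c$. The one structural fact that makes everything work is recorded first: since $[\gamma_l(G),G]=\gamma_{l+1}(G)$, the factor $\gamma_l(G)/\gamma_{l+1}(G)$ is a finite abelian $p$-group on which $G$ acts trivially by conjugation. Consequently, for any subgroup $B\leq \gamma_l(G)/\gamma_{l+1}(G)$ with preimage $N$ in $G$, one has $[N,G]\subseteq [\gamma_l(G),G]=\gamma_{l+1}(G)\subseteq N$, so that $N\unlhd G$ automatically. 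In other words, no $G$-invariance has to be checked when refining the factors, which is the main convenience to exploit.

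Next I would refine each factor. As $\rk(G)=r$, every subgroup of $G$ needs at most $r$ generators, so in particular $d(\gamma_l(G)/\gamma_{l+1}(G))\leq r$. By the structure theorem for finite abelian $p$-groups, $\gamma_l(G)/\gamma_{l+1}(G)\cong \bigoplus_{i=1}^{s_l}\Z/p^{a_i}$ with $s_l\leq r$, and discarding one cyclic summand at a time gives a chain
\[
\gamma_l(G)/\gamma_{l+1}(G)=B_0\supseteq B_1\supseteq \dots \supseteq B_{s_l}=\{1\}
\]
with every $B_j/B_{j+1}$ cyclic. Pulling these chains back to $G$ and splicing the $c$ resulting blocks together (the bottom of block $l$ is $\gamma_{l+1}(G)$, which is the top of block $l+1$) produces a descending series of subgroups that are normal in $G$, by the trivial-action observation, and with cyclic factors. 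This establishes (a), and since the total number of factors is $\sum_{l=1}^{c}s_l\leq cr$, it establishes (c).

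Finally I would verify (b), which is where the bookkeeping needs care. Any two consecutive terms $G_i\supsetneq G_{i+1}$ of the spliced series lie inside a single block $l$, that is $\gamma_{l+1}(G)\subseteq G_{i+1}\subsetneq G_i\subseteq \gamma_l(G)$. From $\gamma_{l+1}(G)\subseteq G_{i+1}$ we get $\gamma_{l+1}(G/G_{i+1})=1$, while $G_{i+1}\subsetneq \gamma_l(G)$ gives $\gamma_l(G/G_{i+1})=\gamma_l(G)/G_{i+1}\neq 1$; hence $G/G_{i+1}$ has nilpotency class exactly $l$, and indeed $G_i\subseteq \gamma_l(G)$ as required. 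The ``in particular'' clause is then immediate from the same observation, since $[G_i,G]\subseteq [\gamma_l(G),G]=\gamma_{l+1}(G)\subseteq G_{i+1}$ shows that $G_i/G_{i+1}$ is central in $G/G_{i+1}$.

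The only genuinely delicate point is matching the class $l$ of the quotient $G/G_{i+1}$ with the containment $G_i\leq \gamma_l(G)$ across the splice points between blocks; the block description above handles this uniformly, and everything else is a direct application of the standard lower-central-series identities $[\gamma_l(G),G]=\gamma_{l+1}(G)$ and $\gamma_l(G/N)=\gamma_l(G)N/N$. I do not expect any serious obstacle beyond this indexing.
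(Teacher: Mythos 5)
Your proposal is correct and follows essentially the same route as the paper: refine the lower central series by splitting each abelian factor $\gamma_l(G)/\gamma_{l+1}(G)$ (of rank at most $r$) into a chain with cyclic quotients, giving at most $cr$ steps in total. Your verification of normality via $[N,G]\subseteq\gamma_{l+1}(G)\subseteq N$ and of part (b) via the block bookkeeping is a slightly more explicit version of what the paper leaves as ``clear by construction.''
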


\begin{proof}
Let $G$ be a finite $p$-group of nilpotency class $c$. That is, the lower central series $\{\gamma_i(G)\}_{i\geq 1}$ satisfies that $\gamma_{c+1}(G)=\{1\}$ and $\gamma_i(G)\neq 0$ for all $i\leq c$. Note that each factor group $A_i:=\gamma_i(G)/\gamma_{i+1}(G)$ is an abelian $p$-group of rank at most $r$ and it is central in $G/\gamma_{i+1}(G)$. Moreover, each $A_{i}$ has a decomposition with cyclic factor groups, say,
\[
A_{i}=B_{0}^i\supseteq B_{1}^i\supseteq \dots \supseteq B_{l_i}^i=\{1\},
\]
where $B_{j}^i/B_{j+1}^i$ is cyclic and the values $l_i$ depend on the rank of $A_i$ (which in turn depends on the rank of $G$). Write $B_i^j:= C_i^j/\gamma_{i+1}(G)$ for each $i\in \{1, \dots, c\}$ and $j\in \{1, \dots, l_i\}$. Then, we can form a normal series
\[
G=\gamma_1(G)\supseteq C_{1}^1\supseteq \dots \supseteq C_{1}^{l_1}=\gamma_2(G)\supseteq C_{2}^1 \supseteq \dots \supseteq \gamma_{c+1}(G)=\{1\},
\]
where the length of the series depends on $c$ and $r$. For simplicity, denote the above series by $\{G_i\}_{i=0}^{h}$. Finally, it is clear by construction that if  $G/G_{i+1}$ has nilpotency class $l$, then $G_{i+1}\leq \gamma_l(G)$ and in particular, $G_i/G_{i+1}$ is central in $G/G_{i+1}$. Moreover, since the rank of $G$ is $r$, each factor $\gamma_i(G)/\gamma_{i+1}(G)$ has rank at most r and the bound on $h(c,r)$ is attained. 
\end{proof}

We continue by proving a result in commutator subgroups of finitely generated pro-$p$ groups. As usual in the category of (finitely generated) pro-$p$ groups, generation is considered topologically and subgroups are closed unless otherwise stated. 

\begin{Lem}\label{lemma: commutatorlemma}
Let $c$ and $k$ be non-negative integers. Let $G$ be a finitely generated pro-$p$ group of nilpotency class $c$ and let $K=G^{p^k}\leq G$.  
\begin{itemize}
\item[(a)] Suppose that $p$ is odd and that $k-1\geq \log_p({c+1})$, then 
$$[N,K]\leq N^p,$$
for all normal subgroups $N$ of $G$ and in particular, $K$ is powerful.
\item[(b)] Suppose that $p=2$ and that $k-2\geq \log_p({c+1})$, then 
$$[N,K]\leq N^4,$$
for all normal subgroups $N$ of $G$ and in particular, $K$ is powerful.
\end{itemize}
\end{Lem}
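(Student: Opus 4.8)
The plan is to prove the commutator bound $[N,K]\le N^p$ (for $p$ odd, resp. $[N,K]\le N^4$ for $p=2$) directly for \emph{all} normal subgroups $N$, and then obtain powerfulness of $K$ as the special case $N=K$: since $K=G^{p^k}$ is characteristic, hence normal in $G$, the bound applied to $N=K$ reads $[K,K]=[K,G^{p^k}]\le K^p$ (resp. $K^4$), which is exactly the definition of powerful. So the entire content lies in the inclusion $[N,K]\le N^p$. For the reductions, note that $N^p$ is characteristic in $N\trianglelefteq G$, hence normal in $G$, and that finite generation makes the relevant verbal subgroups closed, so it suffices to test the inclusion on topological generators. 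Using the identity $[n,z_1z_2]=[n,z_2]\,[n,z_1]^{z_2}$ together with normality of $N^p$, the second entry can be restricted to the generators $g^{p^k}$ of $K$. Thus I reduce to showing $[a,g^{p^k}]\in N^p$ for every $a\in N$ and $g\in G$.

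The starting point is the elementary identity $[a,g^{n}]=\prod_{i=0}^{n-1}[a,g]^{g^{i}}$, applied with $n=p^k$. Setting $c_0=a$ and $c_{i+1}=[c_i,g]$, one has $c_i\in[N,{}_iG]\le N\cap\gamma_{i+1}(G)$, so that $c_i=1$ for all $i\ge c$; in particular only finitely many iterated commutators survive. A Hall--Petrescu style collection rewrites $\prod_{i=0}^{p^k-1}[a,g]^{g^{i}}$ in the form
\[
[a,g^{p^k}]=\Big(\prod_{i=1}^{c-1}c_i^{\binom{p^k}{i}}\Big)\cdot w,
\]
where $w$ is a product of higher commutators in the $c_i$, lying in deeper terms of the lower central series.

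The numerical heart is the valuation computation $\nu_p\binom{p^k}{i}=k-\nu_p(i)$ for $1\le i\le p^k$. For the surviving range $1\le i\le c-1$, the hypothesis gives $i\le c-1<c+1\le p^{k-1}$ when $p$ is odd (resp. $i<2^{k-2}$ when $p=2$), hence $\nu_p(i)\le k-2$ (resp. $\nu_2(i)\le k-3$) and therefore $\nu_p\binom{p^k}{i}\ge 2$ (resp. $\nu_2\binom{2^k}{i}\ge 3$). Since each $c_i\in N$, every main factor $c_i^{\binom{p^k}{i}}$ is a $p$-th power in $N$ (resp. a fourth power), so it lies in $N^p$ (resp. $N^4$). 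Consequently $[a,g^{p^k}]\in N^p\cdot w$, and the whole problem collapses to controlling the correction term $w$.

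The correction term is exactly the main obstacle, and I propose to dispose of it by induction on the nilpotency class $c$. The base case $c=1$ is trivial, since then $[N,K]\le[N,G]=1$. For the inductive step, set $Z=\gamma_c(G)\le Z(G)$ and pass to $\overline G=G/Z$, which has class $\le c-1$ and still satisfies the hypothesis because $(c-1)+1\le c+1\le p^{k-1}$ (resp. $2^{k-2}$); the induction hypothesis then yields $[\overline N,\overline K]\le \overline N^{\,p}$, that is, $[N,K]\le N^pZ$. It remains to show that the image of each generator $[a,g^{p^k}]$ in the central layer $Z$ is trivial modulo $N^p$. Here centrality of $Z$ makes the collection formula exact on the top layer, so the $Z$-component is a product of $N$-elements raised to binomial exponents $\binom{p^k}{i}$; the delicate part is to verify that the correction commutators contribute to $Z$ only through exponents that are again divisible by $p$ (resp. by $4$). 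This is the step I expect to require the most care: a precise bookkeeping of the binomial coefficients attached to each higher commutator in the collection process, so that the valuation estimate above can be applied uniformly to both the main factors and the corrections, forcing the entire central contribution into $N^p$ (resp. $N^4$).
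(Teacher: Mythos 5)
Your reduction to generators, the identity $[a,g^{n}]=\prod_{i=0}^{n-1}[a,g]^{g^{i}}$, the vanishing $c_i=1$ for $i\geq c$, and the valuation computation $\nu_p\binom{p^k}{i}=k-\nu_p(i)\geq 2$ (resp.\ $\geq 3$ for $p=2$) on the ``main'' factors are all correct. But the proof has a genuine gap exactly where you flag it: the correction term $w$. The elements of $w$ are commutators involving at least two of the $c_i$, hence they lie in subgroups such as $[[N,G],[N,{}_jG]]\leq [N,N]$, which is \emph{not} contained in $N^p$ for a general normal subgroup $N$; being ``deeper in the lower central series'' does not help, since $\gamma_m(G)\not\leq N^p$. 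So everything hinges on showing that each such correction commutator occurs with exponent divisible by $p$ (resp.\ $4$), and that bookkeeping is the entire mathematical content of the lemma --- it is not a routine afterthought. Your proposed rescue by induction on the class does not close the gap either: from the inductive hypothesis you only get $[a,g^{p^k}]\in N^pZ$ with $Z=\gamma_c(G)$, and there is no well-defined ``$Z$-component modulo $N^p$'' to analyze; the statement that ``centrality of $Z$ makes the collection formula exact on the top layer, so the $Z$-component is a product of $N$-elements raised to binomial exponents $\binom{p^k}{i}$'' is precisely the unproved claim, and it is false as stated, because the weight-$c$ contributions include correction commutators of weight $\geq 2$ in $N$ whose exponents are \emph{not} the single binomials $\binom{p^k}{i}$ but more complicated integer combinations arising from the collection process.

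The paper avoids all of this by invoking Theorem 2.4 of Fern\'andez-Alcober--Gonz\'alez-S\'anchez--Jaikin-Zapirain, which is exactly the packaged form of the bookkeeping you defer: it gives the congruence
$[N,G^{p^k}]\equiv [N,G]^{p^k} \pmod{[N,{}_pG]^{p^{k-1}}[N,{}_{p^2}G]^{p^{k-2}}\cdots [N,{}_{p^k}G]}$,
after which the lemma is a two-line consequence, since $[N,{}_{p^i}G]^{p^{k-i}}\leq N^{p^{k-i}}$ and the trailing terms die in $\gamma_{c+1}(G)=1$ under the hypothesis on $k$. Note in particular that the modulus there groups the weights in blocks $p^{i}\leq j<p^{i+1}$ with uniform exponent $p^{k-i}$ applied to \emph{all} commutators of that $g$-weight, including those of higher $N$-weight --- this is the statement your sketch would ultimately need to reprove. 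Either cite that theorem (as the paper does) or carry out the full Hall--Petrescu analysis with explicit control of the exponents on every correction commutator; as written, the proposal does not establish the inclusion $[N,K]\leq N^{p}$.
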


\begin{proof}
Note that the group $K$ is a finite index subgroup of $G$, therefore it is open subgroup and also a closed subgroup of $G$.
We start with the odd case. By Theorem 2.4 in \cite{FAGSJZ}, we have that for all normal subgroups $N$ of $G$,
\begin{align*}
[N,K]=[N,G^{p^k}]&\equiv [N,G]^{p^k}(\text{mod} [N,{}_pG]^{p^{k-1}}[N,{}_{p^2}G]^{p^{k-2}}\dots [N,{}_{p^{k}}G])\\
&\leq N^{p^k} (\text{mod} [N,{}_pG]^{p^{k-1}}[N,{}_{p^2}G]^{p^{k-2}}\dots [N,{}_{p^{k}}G]),
\end{align*}and since $[N,{}_{p^i}G]^{p^{k-i}}\leq N^{p^{k-i}}$ for $i=0, \dots, k-1$ and $[N,{}_{p^k}G]\leq \gamma_{c+1}(G)=1$, the statement follows.

If $p=2$, again by Theorem 2.4 in \cite{FAGSJZ}, we have that
\begin{align*}
[N,K]=[N,G^{2^k}]&\equiv [N,G]^{2^k}(\text{mod} [N,{}_2G]^{2^{k-1}}[N,{}_{2^2}G]^{2^{k-2}}\dots [N,{}_{2^{k}}G])\\
&\leq N^{2^k} (\text{mod} [N,{}_2G]^{2^{k-1}}[N,{}_{2^2}G]^{2^{k-2}}\dots [N,{}_{2^{k}}G]).
\end{align*}Now, since $[N,{}_{2^i}G]^{2^{k-i}}\leq N^{2^{k-i}}$ for $i=0, \dots, k-2$ and both $[N,{}_{2^k}G]$ and $[N,{}_{2^{k-1}}G]^2$ are contained in $\gamma_{c+1}(G)=1$, the statement follows.
\end{proof}

\begin{Not}\label{Not: p*}
For a prime number $p$, set $p^*=p$ if $p$ is odd and $p^*=4$ if $p=2$.
\end{Not}

Using the above result, we prove that a finite $p$-group $G$ of fixed nilpotency class $c$ contains a powerful $p^*$-central $p$-subgroup $B$ with the $\Omega$EP. The existence of this subgroup implies that the restriction map from $H^*(G;\F_p)$ to $H^*(B;\F_p)$ is non-trivial (see Theorems \ref{Thm:Weigel} and \ref{Thm:Weigelp2}) and this is crucial to apply Theorem \ref{thm:cohringsfromquotient} in the proof the of the Main result. 

\begin{Prop}
\label{prop:GpOmega}
Let $c$ and $k$ be positive integers and let $p$ be a prime number. Let $G$ be a finite $p$-group of nilpotency class $c$, let $C=\langle y\rangle$ be a cyclic subgroup of $G$ such that $C\leq \gamma_c(G)$. Set $K:=G^{p^k}$.
\begin{itemize}
\item[(a)] If $p$ is odd, suppose that $k-1\geq \log_p({c+1})$, then there exists a $p$-group $H$ such that 
\begin{enumerate} 
\item[(a1)] $H$ is powerful $p$-central.
\item[(a2)] $K\cdot C\cong H/\Omega_1(H)$, that is, $K\cdot C$ is a powerful $p$-central $p$-group with the $\Omega$EP.
\end{enumerate}
\item[(b)] If $p=2$, suppose that $k-2\geq \log_2({c+1})$, then there exists a $2$-group $H$ such that
\begin{enumerate}
\item[(b1)] $H$ is powerful $4$-central.
\item[(b2)] $K\cdot C\cong H/\Omega_1(H)$, that is, $K\cdot C$ is a powerful $4$-central $2$-group with the $\Omega$EP.
\end{enumerate}
\end{itemize}
\end{Prop}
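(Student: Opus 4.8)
The plan is to extract all the structural content of $KC$ from the commutator estimates of Lemma~\ref{lemma: commutatorlemma} and then to build the witnessing group $H$ by a single ``power lift''.

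First I would record the one fact that makes $C$ harmless: since $C\le\gamma_c(G)$ and $[\gamma_c(G),G]=\gamma_{c+1}(G)=1$, the subgroup $C$ is central in $G$. Hence $KC$ is normal in $G$, $C\le Z(KC)$, and $[KC,KC]=[K,K]$. Taking $N=K$ in Lemma~\ref{lemma: commutatorlemma}(a) (resp. (b)) gives $[K,K]\le K^{p}$ (resp. $K^{4}$), so $[KC,KC]\le (KC)^{p^{*}}$ and $KC$ is powerful. More importantly, the lemma yields the uniform bound $[N,KC]\le N^{p}$ (resp. $N^{4}$) for every normal subgroup $N\le G$, because $[N,KC]=[N,K]$; this uniform bound is exactly what controls $\Omega_1$.

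Next I would prove that $KC$ is $p$-central (resp. $4$-central). Writing $N=\Omega_1(KC)$, which is characteristic in $KC$ and hence normal in $G$, the uniform bound gives $[\Omega_1(KC),KC]\le\Omega_1(KC)^{p}$; since $KC$ is powerful its $\Omega_1$ has exponent $p$, so the right-hand side is trivial and $\Omega_1(KC)\le Z(KC)$. For $p=2$ the same computation with the $N^{4}$ estimate and $\Omega_2(KC)$ gives $4$-centrality, and the powerful structure forces $\Omega_2(KC)\cong(\Z/4\Z)^{r}$, i.e. $4^{*}$-centrality. At this point $KC$ is a powerful $p^{*}$-central group.

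It remains to produce a $p^{*}$-central group $H$ with $H/\Omega_1(H)\cong KC$, and this is the crux. The lift should add ``one power'': the $K$-part is modelled on $G^{p^{k-1}}$, whose $p$-th powers generate $K=G^{p^{k}}$, and the central cyclic factor $C=\langle y\rangle$ of order $p^{m}$ is lifted by adjoining a cyclic group $\langle t\rangle$ of order $p^{m+1}$ amalgamated along $t^{p}=y$; concretely I would take $H$ to be a central extension of $KC$ by an elementary abelian $p$-group whose power structure is lifted from $G^{p^{k-1}}$. Note that $H$ need only be $p^{*}$-central, not powerful, which is consistent with the fact that the hypothesis $k-1\ge\log_p(c+1)$ guarantees powerfulness of $G^{p^{k}}$ but leaves $G^{p^{k-1}}$ one power short. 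The main obstacle is the final verification: one must show that the $p$-power map is a well-defined homomorphism on the lift with kernel exactly $\Omega_1(H)$, that $\Omega_1(H)$ is elementary abelian and central (so that $H$ is $p^{*}$-central), and that reduction modulo $\Omega_1(H)$ returns $K$ from the $G^{p^{k-1}}$-part and the \emph{full} $C$---rather than $C^{p}$---from the taller cyclic factor, with the amalgamation $K\cap C$ respected. The fine congruences of Lemma~\ref{lemma: commutatorlemma}, and for $p=2$ the sharper $N^{4}$ estimate, are precisely what force the power map to have the correct kernel; the parallel $p=2$ argument replaces $\Omega_1$ by $\Omega_2$ throughout.
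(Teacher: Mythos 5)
Your first two paragraphs are fine but address the easy half of the proposition: centrality of $C$ (from $C\le\gamma_c(G)$ and $\gamma_{c+1}(G)=1$) and powerfulness of $KC$ via Lemma~\ref{lemma: commutatorlemma} are exactly as in the paper, and $p$-centrality of $KC$ would in any case follow for free once the $\Omega$EP is established. The genuine gap is the construction of $H$, which you correctly identify as ``the crux'' and then do not carry out: you propose to model the lift on $G^{p^{k-1}}$ with a taller cyclic factor amalgamated along $t^p=y$, and you explicitly defer ``the main obstacle'' --- that the $p$-power map is a homomorphism with kernel exactly $\Omega_1(H)$, that $\Omega_1(H)$ is central, and that the quotient returns all of $C$. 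This verification is not a routine check that the congruences of Lemma~\ref{lemma: commutatorlemma} will supply: inside the finite group $G$ there is torsion you cannot control, the $p$-power map on $G^{p^{k-1}}$ need not be a homomorphism, and there is no mechanism preventing the proposed extension from acquiring extra elements of order $p$ outside the designated elementary abelian kernel, which would make $\Omega_1(H)$ strictly larger than intended and destroy the isomorphism $H/\Omega_1(H)\cong KC$. (You also concede your $H$ would not be powerful, whereas (a1) asserts it is.)

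The paper's proof resolves precisely this point by leaving $G$ altogether: it takes the relatively free nilpotent pro-$p$ group $F$ of class $c$ on a minimal generating set, with $\pi:F\to G$, $N=\ker\pi$, and a preimage $x\in\gamma_c(F)$ of $y$, and sets $M=N\cap\langle x,F^{p^k}\rangle$ and $H=\langle x,F^{p^k}\rangle/M^{p^*}$. Lemma~\ref{lemma: commutatorlemma} applied in $F$ (with the normal subgroup $M$) gives $[M,M]\le M^{p^*}$, so $M$ is powerful; since $F$, hence $M$, is torsion-free, $M$ is \emph{uniform}, so $M^{p^*}$ consists exactly of $p^*$-th powers and $\Omega_1(H)=M/M^p$ (resp.\ $\Omega_2(H)=M/M^4$) on the nose, while $[M,F^{p^k}]\le M^{p^*}$ gives centrality of this kernel. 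Torsion-freeness of the free cover is the missing ingredient that makes the power map and the $\Omega$-subgroup computable; without some substitute for it, your construction cannot be completed as sketched.
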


\begin{proof}
Let $X$ be a minimal system of generators of $G$ and let $F$ denote the free pro-$p$ group on $X$ of nilpotency class $c$. Let $\pi :F\to G$ denote the natural projection map, let $N$ denote its kernel and let $x$ be a preimage of $y$ contained in $\gamma_c(F)$. From now on, the proof of the proposition deals with the $p$ odd case and the $p=2$ case, separately.

(a) Suppose that $p$ is an odd prime and let $k$ be an integer such that $k-1\geq \log_p({c+1})$. Put $M:= N\cap \langle x,F^{p^k}\rangle$ and let $H:=\langle x,F^{p^k}\rangle /M^p$ be a finite $p$-group. Then, 
$$
K\cdot C=G^{p^k}\cdot \langle y\rangle\cong \left(\frac{F}{N}\right)^{p^k} \cdot \frac{\langle x,N\rangle}{N}=\frac{\langle x,F^{p^k}\rangle N}{N}\cong \frac{\langle x,F^{p^k}\rangle}{N\cap \langle x,F^{p^k}\rangle }=\frac{\langle x, F^{p^k}\rangle}{M}.
$$
Taking into account that $x$ is contained in the center of $F$ and applying the previous lemma to $N=F^{p^k},$ we obtain that
$$
[\langle x,F^{p^k}\rangle,\langle x,F^{p^k}\rangle]= [F^{p^k},F^{p^k}]\leq (F^{p^k})^p.
$$ 
Then, $\langle x,F^{p^k}\rangle$ is a powerful pro-$p$ group and whence, $K\cdot C=G^{p^k}\cdot C$ is also a powerful $p$-group. Moreover, $M$ is a normal subgroup of $F$ that is contained in $\langle x,F^{p^k}\rangle$ and
$$[M,M]\leq [M,\langle x,F ^{p^k}\rangle]\leq[M,F ^{p^k}]\leq M^p,$$
where in the first inequality we used that $x$ is central and in second inequality we used the previous lemma with $N=M$. This inequality yields that $M$ is a finitely generated powerful pro-$p$ group. Moreover, as $F$ and $M$ are torsion-free (Compare \cite[Theorem 5.6 and Corollary bellow]{Phall} and \cite[Theorem 10.14]{Warf}), we have that $M$ is a uniform pro-$p$ group \cite[Theorem 4.5]{DDMS}. Thus, $M^p=\{ m^p\mid m\in M\}$ (see \cite[Theorem 3.6]{DDMS}). In particular, by \cite[Lemma 4.10]{DDMS}, 
$$\Omega_1(H)=\Omega_1(\langle x,F^{p^k}\rangle/M^p)=M/M^p,$$ 
and 
$$[\Omega_1(H),H]=[M/M^p,\langle x,F^{p^k}\rangle/M^p]=[M,F^{p^k}]M^p/M^p=1.$$ 
Therefore, $H$ is a powerful $p$-central $p$-group  and 
$$H/\Omega_1(H)=\frac{\langle x,F^{p^k}\rangle /M^p}{M/M^p}\cong \frac{\langle x,F^{p^k}\rangle}{M}\cong \langle y,G^{p^k}\rangle=K\cdot C.$$ 
The last equality shows that $K\cdot C$ has the $\Omega$EP and by \cite[Corollary 2.4]{GW}, we conclude that $G^{p^k}\cdot C$ is a powerful $p$-central $p$-group with the $\Omega$EP. 

(b) Suppose that $p=2$ and let $k$ be an integer such that $k-2 \geq \log_2({c+1})$. Construct $M$ and $H$ as in (a) by taking $p=2$. Also, take $N=F^{2^k}$ in the previous lemma to obtain that $[\langle x,F^{2^k}\rangle,\langle x,F^{2^k}\rangle]=[F^{2^k},F^{2^k}]\leq (F^{2^k})^4$. Then, $\langle x,F^{2^k}\rangle$ is a powerful pro-$2$ group and whence, $K\cdot C=G^{2^k}\cdot C$ is also a powerful $2$-group. 

Note that again $M$ is a normal subgroup of $F$ that is contained in $\langle x, F^{2^k}\rangle$. Then,
$$[M,M]\leq [M, \langle x, F^{2^k}\rangle]\leq [M,F ^{2^k}]\leq M^4,$$
where in the first inequality we used the fact that $x$ is central and in second inequality we used the previous lemma with $N=M$. This in turn shows that $M$ is a finitely generated powerful pro-2 group. As before, $F$ and $M$ are torsion-free and we have that $M$ is a uniform pro-$2$ group. Thus, $M^4=\{ m^4\mid m\in M\}$ (see \cite[Theorems 4.5, 3.6]{DDMS}). In particular, by \cite[Lemma 4.10]{DDMS}, 
$$\Omega_2(H)=\Omega_2(\langle x,F^{2^k}\rangle/M^4)=M/M^4,$$ 
and 
$$[\Omega_2(H),H]=[M/M^4,\langle x,F^{2^k}\rangle/M^4]=[M,F^{2^k}]M^4/M^4=1.$$ 
Therefore, $H$ is a powerful $4$-central $2$-group. Notice that by  \cite[Corollary 2.4]{GW} the group $H/\Omega_1(H)$ is also a powerful $4$-central $2$-group. 
Furthermore 
$$\frac{H/\Omega_1(H)}{\Omega_1(H/\Omega_1(H))}\cong H/\Omega_2(H)=\frac{\langle x,F^{2^k}\rangle /M^4}{M/M^4}\cong \frac{\langle x, F^{2^k}\rangle}{M}\cong \langle y,G^{2^k}\rangle=K.C.$$ 
The last equality shows that $K$ has the $\Omega$EP and by  \cite[Corollary 2.4]{GW}, we conclude that $G^{2^k}$ is a powerful $4$-central $2$-group with the $\Omega$EP. The last property easily follows by construction.
\end{proof}

\begin{Rmk}
In the above result, if $G$ has a bounded number of minimal generators $d$, then the index $|G:K\cdot C|$ is $(p,d,c)$-bounded; a crucial condition to apply Theorem \ref{thm:cohringsfromquotient} to  $K\cdot C$.
\end{Rmk}

For $p=2$, we would also like to conclude that the aforementioned restriction map $H^*(G;\F_p)\to H^*(B;\F_p)$ is non-trivial. To that aim, $B$ should additionally be a $4^*$-central subgroup (see Theorem \ref{Thm:Weigelp2}).

\begin{Lem}
\label{lem:4central}
Let $G$ be a $d$-generated $4$-central powerful $2$-group with the $\Omega$EP. Then, there exists a normal subgroup $N$ of $G$ such that $|G:N|\leq 2^d$ and $N$ is a $4^*$-central powerful $2$-group with the $\Omega$EP. Furthermore, if $C$ is a cyclic central subgroup of $G$ of size bigger than $2$, then $N\cdot C$ is also a $4^*$-central subgroup with the $\Omega$EP.
\end{Lem}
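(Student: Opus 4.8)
The plan is to isolate the ``bad'' part of $\Omega_2(G)$ as a central elementary abelian direct factor and to take $N$ to be a complement; powerfulness of $N$ is then immediate, and the remaining properties follow from the structure of $\Omega_2(G)$ together with an inheritance argument for the $\Omega$EP.

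First I would analyse $\Omega_2(G)$. Since $G$ is $4$-central, $\Omega_2(G)\leq Z(G)$ is abelian of exponent dividing $4$, so $\Omega_2(G)\cong(\Z/4\Z)^r\times(\Z/2\Z)^s$. As $G$ is powerful one has $\Phi(G)=G^2=\{g^2\mid g\in G\}$ and $\Omega_2(G)=\{g\in G\mid g^4=1\}$ (see \cite[Theorems 3.6, 4.5]{DDMS}), and I would use this to show that the involutions lying in $\Phi(G)$ are exactly the squares of order-$4$ elements, that is
\[
\Omega_1(G)\cap\Phi(G)=\Omega_2(G)^2=:W\cong(\Z/2\Z)^r .
\]
Hence the ``bad'' involutions are measured by $\Omega_1(G)/W\cong(\Z/2\Z)^s$, and since $W=\Omega_1(G)\cap\Phi(G)$ this quotient embeds into $G/\Phi(G)$, giving $s\leq d(G)\leq d$. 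Now choose $B\cong(\Z/2\Z)^s$ a complement to $W$ in $\Omega_1(G)$. Then $B\leq Z(G)$ and $B\cap\Phi(G)=B\cap W=1$, so $B$ is a direct factor: $G=B\times N$ with $\Phi(G)\leq N$, $N\unlhd G$ and $|G:N|=2^s\leq 2^d$. Powerfulness of $N$ -- the step I was initially most worried about -- is now free, since $[N,N]=[G,G]\leq G^4=N^4$.

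Next I would verify that $N$ is $4^*$-central with the $\Omega$EP. From $G=B\times N$ one gets $\Omega_2(G)=B\times\Omega_2(N)$ and $\Omega_1(G)=B\times\Omega_1(N)$; comparing the latter with $\Omega_1(G)=W\times B$ yields $\Omega_1(N)=W\cong(\Z/2\Z)^r$, while $|\Omega_2(N)|=|\Omega_2(G)|/2^s=2^{2r}$. An abelian group of exponent dividing $4$, of order $2^{2r}$ and with socle of rank $r$, must be $(\Z/4\Z)^r$, so $\Omega_2(N)\cong(\Z/4\Z)^r$ and $N$ is $4^*$-central. For the $\Omega$EP, write $G\cong H/\Omega_1(H)$ with $H$ a $4$-central group and let $\tilde N$ be the full preimage of $N$. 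Every involution of $H$ lies in $\Omega_1(H)\leq\tilde N$, so $\Omega_1(\tilde N)=\Omega_1(H)$; moreover $\Omega_2(\tilde N)\leq\Omega_2(H)\cap\tilde N\leq Z(H)\cap\tilde N\leq Z(\tilde N)$, so $\tilde N$ is $4$-central. Thus $N\cong\tilde N/\Omega_1(\tilde N)$ exhibits the $\Omega$EP; in fact this shows that every subgroup of $G$ inherits the $\Omega$EP, which I will reuse for $N\cdot C$.

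For the last assertion, let $C=\langle c\rangle$ with $|c|=2^m\geq 4$. The unique involution of $C$ is $c^{2^{m-1}}=(c^{2^{m-2}})^2\in\Omega_2(G)^2=W\leq N$, so $\Omega_1(C)\leq N$ automatically; this is the crucial point, as it says that $C$ never contributes a bad involution. Since every complement constructed above contains $\Phi(G)$, whenever $C\leq\Phi(G)$ -- the situation in the applications, where $C\leq\gamma_c(G)\leq[G,G]\leq\Phi(G)$ -- one has $c\in\Phi(G)\leq N$, hence $C\leq N$ and $N\cdot C=N$ is $4^*$-central, its $\Omega$EP being covered by the inheritance argument. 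The main obstacle lies precisely in the general case: one must peel off all of $B$ as a direct factor while keeping the order-$4$ part $\Omega_2(C)=\langle c^{2^{m-2}}\rangle$ inside the homocyclic factor $\Omega_2(N)$, equivalently ensuring that $c^{2^{m-2}}$ is not congruent to an element of $B$ modulo $\Phi(G)$. I would handle this by choosing the complement $B$ compatibly with $C$ (possible because $\Omega_1(C)\leq W$), the choice being forced and automatic once $C\leq\Phi(G)$.
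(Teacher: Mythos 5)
Your construction of $N$ and your verification of the first assertion are correct and, despite the different packaging, essentially coincide with the paper's: the paper takes $N/G^2$ to be a complement of the kernel of the squaring map $G/G^2\to G^2/G^4$, and that kernel is exactly the image of $\Omega_1(G)$ in $G/G^2$, i.e.\ the image of your $B$; both constructions produce the same family of subgroups $N\supseteq\Phi(G)$. Your count $s\leq d$, the powerfulness of $N$, and the identification $\Omega_2(N)\cong(\Z/4\Z)^r$ by comparing order and socle rank are all fine (the paper instead checks directly that every involution of $N$ lies in $N^2$). Your preimage argument for the $\Omega$EP is a genuine improvement in completeness: the paper merely asserts that the $\Omega$EP passes to subgroups, and your proof via $\Omega_1(\widetilde N)=\Omega_1(H)$ is exactly the missing justification.

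The gap is in the ``furthermore'' part, and it is real: the compatible choice of $B$ that you invoke does not exist in general, and the condition ``$\Omega_1(C)\leq W$'' does not deliver it. Take $G=\Z/8\Z\times\Z/2\Z=\langle a\rangle\times\langle t\rangle$, which is a $2$-generated abelian (hence powerful, $4$-central) group with the $\Omega$EP (it is $(\Z/16\Z\times\Z/4\Z)/\Omega_1$), and let $C=\langle a^2t\rangle\cong\Z/4\Z$. Here $W=\langle a^4\rangle$, $\Omega_1(G)=\langle a^4,t\rangle$, and for \emph{every} choice of complement $B$ one has $B\Phi(G)=\Omega_1(G)\Phi(G)=\langle t\rangle\Phi(G)$, because $W\leq\Phi(G)$; since $a^2t\equiv t\pmod{\Phi(G)}$, the generator of $C$ is always congruent to an element of $B$ modulo $\Phi(G)$, which is precisely the situation you need to avoid. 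Concretely, the only admissible $N\supseteq\Phi(G)$ are $\langle a\rangle$ and $\langle at\rangle$, and in both cases $N\cdot C=G$ with $\Omega_2(G)\cong\Z/4\Z\times\Z/2\Z$, which is not homocyclic. So no choice of $B$ in your scheme (nor of $W$ in the paper's) makes $N\cdot C$ $4^*$-central here. You have not overlooked an argument from the paper: its entire treatment of this point is the clause ``and so is $N\cdot C$,'' which has the same defect. The statement is salvaged exactly when $\Omega_2(C)\leq N$ -- in particular when $C\leq\Phi(G)\leq N$, so that $N\cdot C=N$ -- and your observation that this is the situation arising in the application (where $C$ sits inside the last term of the lower central series) is the right way to use the lemma; but as a proof of the lemma as stated, the final step fails.
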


\begin{proof}
Let $\pi :G/G^2\to G^2/G^4$ be the homomorphism defined by $\pi (xG^2)=x^2G^4$. Let $V$ be the kernel of $\pi$ and let $W$ be a complement of $V$ in $G/G^2$. Since $\pi$ is surjective the restriction $\pi_{W} :W\to G^2/G^4$ is an isomorphism. Let $x_1G^2,\ldots ,x_lG^2$ be generators of $W$ and put $N=\langle x_1,\ldots ,x_l\rangle$. By construction, $N^2=G^2$ and in particular, $N$ is a normal subgroup of $G$. Furthermore, $[N,N]\leq [G,G]\leq G^4=N^4$ and thus, $N$ is a powerful subgroup of $G$.

Let us check that $N$ is $4^*$-central. Since $N$ is powerful $4$-central, it suffices to show that $\Omega_1(N)=\Omega_1(N^2)$. Observe that $\Omega_1(N^2)\subseteq \Omega_1(N)$. Now, if $g\in N$ and $g^2=1$, then $gG^2\in V$. As $W$ is a complement of the kernel $V$ of $\pi$, this can only happen if $gG^2=1.G^2$. That is, $g$ must be contained in $G^2=N^2$. Thus, $N$ is a powerful $4^*$-central subgroup and so is $N\cdot C$. Finally, note that the $\Omega$EP property is closed under taking subgroups and this finishes the proof.
\end{proof}

\section{A counting argument using spectral sequences}\label{sec: countingarguments}

In this section, we shall consider a central extension $C\to G\to Q$ of finite $p$-groups with $C$ a cyclic $p$-group and show that under mild assumptions, the cohomology algebra $H^*(G;\F_p)$ is determined up to a finite number of possibilities by the cohomology algebra $H^*(Q;\F_p)$. Henceforth, by a spectral sequence associated to a central extension, we mean the Lyndon-Hochschild-Serre spectral sequence (LHS for short) \cite[VII.6]{KSBrown82}. The following result can be considered as a generalization of Theorem 4.2 in \cite{DGG1} and it is crucial in the proof of the main theorem. 

\begin{Thm}\label{thm:cohringsfromquotient}
Let $p$ be a prime number, let $k, c, r, f$ be positive integers and suppose that 
\begin{equation}
\label{eq:cohringsfromquotient}
1 \to C \to G \to Q \to 1
\end{equation}
is a central extension of finite $p$-groups where $C$ a cyclic group of order $p^k$ with $k\geq 1$, $\rk(G)\leq r$ and $Q$ has a subgroup $A$ of nilpotency class $c$ with $|Q:A| \leq f$. Then the ring $H^*(G;\F_p)$ is determined up to a finite number of possibilities (depending on $p$, $k$, $c$, $r$ and $f$) by the ring $H^*(Q;\F_p)$.
\end{Thm}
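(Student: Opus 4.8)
The plan is to exploit the Lyndon--Hochschild--Serre spectral sequence of the central extension \eqref{eq:cohringsfromquotient} and to show that, once the graded ring $H^*(Q;\F_p)$ is fixed, every ingredient needed to reconstruct $H^*(G;\F_p)$ ranges over a set whose size is bounded in terms of $p,k,c,r,f$. Since $C$ is central, $Q$ acts trivially on $H^*(C;\F_p)$, so the $E_2$-page is the bigraded tensor algebra
\[
E_2^{s,t}=H^s(Q;\F_p)\otimes H^t(C;\F_p),
\]
where $H^*(C;\F_p)=H^*(\Z/p^k\Z;\F_p)$ depends only on $p$ and $k$: it is generated by a class $u$ in degree $1$ and a class $v$ in degree $2$ (with $v=u^2$ in the exceptional case $p=2$, $k=1$) and is at most one--dimensional in each total degree. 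Thus the $E_2$-page is completely determined, as a bigraded $\F_p$-algebra, by the given ring $H^*(Q;\F_p)$.

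Next I would analyse the differentials. Each $d_r$ is a derivation that vanishes on the bottom row $E_r^{*,0}=H^*(Q;\F_p)$ and is $H^*(Q;\F_p)$-linear, so it is determined by its effect on the fibre generators $u$ and $v$. The class $u$ transgresses via $d_2(u)=\chi\in H^2(Q;\F_p)$, the mod $p$ reduction of the extension class in $H^2(Q;\Z/p^k\Z)$, while $v$ transgresses on a page bounded in terms of $p$ and $k$, to a class obtained from $\chi$ by a (higher) Bockstein; all remaining differentials are then forced by multiplicativity. Consequently the spectral sequence has nonzero differentials on at most a bounded number of pages, and the whole $E_\infty$-page is determined, as a bigraded algebra, by these two transgression classes in $H^*(Q;\F_p)$. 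Provided the relevant groups $H^n(Q;\F_p)$ have $\F_p$-dimension bounded in terms of the parameters, the transgression classes vary over a finite set of bounded size, yielding only finitely many possibilities for the differentials and hence for $E_\infty$.

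It then remains to pass from $E_\infty$ to $H^*(G;\F_p)$, i.e.\ to solve the additive and the multiplicative extension problems. Here I would use that $\rk(G)\le r$ bounds the Krull dimension of $H^*(G;\F_p)$ by Quillen's theorem, that its Castelnuovo--Mumford regularity is zero (Symonds), and that the hypothesis on $Q$ (a subgroup $A$ of nilpotency class $c$ and index $\le f$) supplies a homogeneous system of parameters of bounded degree; together these should bound the degrees of generators and relations of $H^*(G;\F_p)$, and the dimensions $\dim_{\F_p}H^n(G;\F_p)$ in the relevant range, by a function of $p,k,c,r,f$. Then $H^*(G;\F_p)$ is determined by its truncation below that bound. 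In each such total degree the filtration has bounded length, so the additive extension has boundedly many solutions, and lifting the algebra structure from the associated graded $E_\infty$ introduces only finitely many further choices. Multiplying these finite sets of choices gives the asserted uniform bound on the number of isomorphism types of $H^*(G;\F_p)$.

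The step I expect to be the main obstacle is this last one: controlling the multiplicative extension problem uniformly and, above all, deriving the degree and dimension bounds from the hypotheses, so that the a priori unbounded freedom in the extension class (and hence in the differentials) collapses to a bound depending only on $p,k,c,r,f$ rather than on the individual group $Q$. By contrast, the tensor-product form of $E_2$, the transgression of $u$ and $v$, and the additive bookkeeping are comparatively routine once the rank and subgroup hypotheses have been converted into these quantitative bounds.
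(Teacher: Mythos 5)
Your proposal has a genuine gap at the step you treat as ``comparatively routine'': bounding the number of pages on which the spectral sequence has nonzero differentials. Knowing the transgressions $d_2(u)$ and the (higher Bockstein) transgression of $v$ does \emph{not} force the spectral sequence to degenerate after boundedly many pages. If $v$ transgresses nontrivially, then $v^p$ can transgress on a later page (Kudo's transgression theorem, via Steenrod operations on the image), $v^{p^2}$ later still, and so on; how far this continues depends on the individual group, not on $p,k,c,r,f$. The entire point of the hypotheses ``$\rk(G)\le r$'' and ``$Q$ has a subgroup $A$ of class $c$ and index $\le f$'' --- which your argument never really uses for this purpose --- is to manufacture a permanent cycle of \emph{bounded} degree in the fibre column. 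The paper does this by producing (via Proposition \ref{prop:GpOmega} and Lemma \ref{lem:4central}) a powerful $p$-central (resp.\ $4^*$-central) subgroup $B\le G$ with the $\Omega$EP, containing $C$ and of index bounded in terms of $p,r,f$; Weigel's description of $H^*(B;\F_p)$ gives a class $\eta\in H^2(B;\F_p)$ restricting nontrivially to $C$, and the Evens norm $\zeta=\mathcal{N}orm^G_B(\eta)\in H^{2|G:B|}(G;\F_p)$ together with the Mackey formula shows $\res^G_C(\zeta)\neq 0$. This forces $E_\infty^{0,2|G:B|}\neq 0$ and hence collapse by page $2|G:B|+1$, a bound depending only on the parameters. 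Without an ingredient of this kind your count of ``finitely many choices of differentials'' is taken over an unbounded number of pages and does not close.

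A secondary point: the passage from $E_\infty$ to the ring $H^*(G;\F_p)$, which you correctly flag as an obstacle and propose to attack via Quillen's dimension theorem, Symonds' regularity, and generator/relation degree bounds, is handled in the paper much more directly by Carlson's counting argument (\cite[Proposition 3.1 and Theorem 2.1]{Carlson}): once the collapse page is bounded, one chooses a polynomial subalgebra $W$ generated by universal cycles $\tau_i$ (suitable $p$-th powers of parameters from the fibre and the base) over which each page is a finitely generated module, so each differential is determined by its values on finitely many module generators, giving finitely many possibilities for $E_\infty$ and then for the ring. Your Symonds-based route is not developed enough to assess, but in any case it cannot substitute for the missing collapse bound, which is logically prior.
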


\begin{proof}
Let $C=C_{p^k}$ denote a cyclic $p$-group of size $p^k$. We start the proof by treating separately the $p=2$ and $k=1$ case. 

Suppose that $k\geq 1$ if $p$ is odd or $k\geq 2$ if $p=2$. By Proposition \ref{prop:GpOmega} and Lemma \ref{lem:4central}, 
there exists a powerful $p$-central or $4^*$-central subgroup $B$ of $G$ with the $\Omega$EP for $p$ odd or $p=2$, respectively, that contains $C$. The index of $B$ in $G$ is bounded in terms of $p$, $r$ and $f$.  By Theorems \ref{Thm:Weigel}(b) and \ref{Thm:Weigelp2}, there exists a class $\eta\in H^2(B;\F_p)$ such that $\text{res}^B_C( \eta )$ is non-zero.

Suppose that $p=2$ and $k=1$, then $C=C_2$ is either contained in $N$ constructed in Lemma \ref{lem:4central} or $C$ is not contained in $N$. In the former case,   considering $B=N$, we can proceed as in the previous paragraph and there exists a class $\eta\in H^2(B;\F_p)$ such that $\text{res}^B_C( \eta )$ is non-zero. In the latter case, $C\cap N=\{1\}$ and put $B= N\cdot C\cong N\times C_2$. In this case there also exists a class $\eta\in H^2(B;\F_p)$ such that $\text{res}^B_C( \eta )$ is non-zero.

Now, following the arguments of the proof of \cite[Lemma 3.2]{Carlson}, we shall show that the spectral sequence $E$ arising from 
\begin{equation}\label{cyclicextensionfromquotient}
1 \to C \to G \to Q \to 1,
\end{equation}
stops at most at page $2{|G:B|}+1$. As the extension \eqref{cyclicextensionfromquotient} is central, the second page $E_2$ of the spectral sequence is isomorphic to the tensor product (as bigraded algebras),
\[
E_2^{*,*}\cong H^*(Q;\F_p)\otimes H^*(C;\F_p).
\]
Let $\zeta= \N orm^G_B(\eta) \in H^{2|G:B|}(G;\F_p),$ where $\N orm(\cdot)$ is the Evens norm map. Write $|G:B|=p^n$ for short. From the usual Mackey formula for the double coset decomposition $G= \underset{x \in D}\cup C x B$ with $D=C\setminus G/B$, 
\begin{align*}
\text{res}^G_C(\N orm^G_B(\eta))=\text{res}^G_C(\zeta)= \prod_{x \in D} \N orm^{C \cap xBx^{-1}}_C(\text{res}^{xBx^{-1}}_{C \cap xBx^{-1}}(\eta)),
\end{align*}
we have that $\text{res}^G_C(\zeta)\neq 0$. The restriction map on cohomology from $G$ to $C$ is the edge homomorphism on the spectral sequence and thus, the image of the restriction map 
$$
\text{res}^G_C: H^{2p^n}(G;\F_p) \to H^{2p^n}(C;\F_p)
$$
is isomorphic to $E_{\infty}^{0,2p^n}$. Let $\zeta' \in E_{\infty}^{0, 2p^n}$ be an element representing $\text{res}_C^G(\zeta)$. Suppose that $t \geq 2p^n+1$ and let $\mu \in E_t^{r,s}$ with $s=a(2p^n)+b$ where $b < 2p^n$. We may write $\mu= (\zeta')^a\mu'$ for some $\mu' \in E_t^{r,b}$. Then,
\[
d_t(\mu)= d_t((\zeta')^a \mu')=d_t((\zeta')^a)\mu'+(\zeta')^a d_t(\mu')=(\zeta')^ad_t(\mu')=0
\]
as $d_t(\mu') \in E_t^{r+t, b+1-2p^n}$ with $b+1-2p^n<0$. Then, $\zeta'$ is a regular element on $E_t^{*,*}$, in turn, $\zeta$ is regular on $H^*(G;\F_p)$ and $d_t=0$ for all $t \geq 2p^n+1$. Thus, the spectral sequence collapses at most at page $2p^n+1$. We shall finish the proof by following the proof of \cite[Proposition 3.1]{Carlson}.

Let $\gamma_1, \dots, \gamma_u$ be elements in $H^*(C;\F_p)$ generating a polynomial subalgebra over which $H^*(C;\F_p)$ is a finitely generated module. Note that if an element $\gamma \otimes 1 \in H^s(C;\F_p)\otimes 1  \subset E_n^{0,*}$ survives to the $n^{\text{th}}$ page then, as $d_n$ is a derivation,
\[
d_n(\gamma^p)=p\gamma^{p-1}d_n(\gamma)=0.
\]
Since the spectral sequence stops at page $2p^n +1$, we must have that for all $i$, $\tau_i=\gamma_i^{p^{2p^n+1}}$ is a universal cycle. Analogously, let $\tau_{u+1}, \dots, \tau_{v}$ be homogeneous parameters for $H^*(Q;\F_p) \otimes 1 \subset E_n^{*,0}$. Then, $E_2$ is a finitely generated module over the polynomial subalgebra $W$ generated by $\tau_1, \dots, \tau_v$. Moreover, $d_j$ is a $W$-module homomorphism because $d_j(\tau_i)=0$ and thus, $E_{j+1}^{*,*}$ is finitely generated $W$-module. As Carlson says, such generators $\alpha_1, \dots, \alpha_q$ of $E_j^{*,*}$ can be chosen to be homogeneous and the key point is that then $d_j$ is determined by $d_j(\alpha_1), \dots, d_j(\alpha_v)$. For each $i$, there is only a finite number of choices for the images $d_j(\alpha_i)$. So, for all $j \geq 2$, there are finitely many choices for $d_j$ and for the $W$-modules and algebra structures of $E_{j+1}$. Since the spectral sequence stops after finitely many steps, there are finitely many $W$-module and algebra structures for $E_{\infty}^{*,*}$. Now, the result holds from \cite[Theorem 2.1]{Carlson}. 

\end{proof}

\section{Proof of the Main result and further work}\label{sec: Mainproof}

The aim of this section is to prove the Main theorem of this paper using the results in Sections \ref{sec: resultspowerful} and \ref{sec: countingarguments}. 

\begin{Thm}\label{Thm: mainresult}
 Let $p$ be a prime number and let $c$ and $d$ be natural numbers. Then, the number of possible isomorphism types for the mod $p$ cohomology algebra of a $d$-generated $p$-group of nilpotency class $c$ is bounded by a function depending only on $p$, $c$ and $d$.
\end{Thm}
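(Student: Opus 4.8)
The plan is to prove the Main Theorem by induction along the normal series provided by Lemma \ref{lem: normalseries}, using the counting argument of Theorem \ref{thm:cohringsfromquotient} at each step. First I would reduce to the case of bounded rank. Since $G$ is $d$-generated of nilpotency class $c$, its rank $r=\rk(G)$ is $(p,c,d)$-bounded: indeed, each factor $\gamma_i(G)/\gamma_{i+1}(G)$ is generated by (iterated) commutators in the $d$ generators, so the number of generators of each lower central factor, and hence of every section of $G$, is bounded in terms of $c$ and $d$ (and $p$ enters through the bound on the order of the abelianization needed to control powers). Thus it suffices to bound the number of isomorphism types of $H^*(G;\F_p)$ for $G$ a finite $p$-group of nilpotency class $\leq c$ and rank $\leq r$.

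Next I would set up the induction. Apply Lemma \ref{lem: normalseries} to obtain a normal series $G=G_0 \supseteq G_1 \supseteq \dots \supseteq G_h=\{1\}$ of length $h\leq cr$ with each $G_i/G_{i+1}$ cyclic and central in $G/G_{i+1}$, where crucially, by part (b), if $G/G_{i+1}$ has nilpotency class $l$ then $G_i\leq \gamma_l(G/G_{i+1})$ sits inside the top term of the lower central series of the quotient. Writing $Q_i=G/G_i$, each quotient map $Q_{i+1}\to Q_i$ is a central extension
\[
1 \to G_i/G_{i+1} \to Q_{i+1} \to Q_i \to 1
\]
with cyclic central kernel. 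The strategy is to start from the trivial cohomology of $Q_0=\{1\}$ and climb the series: at each stage Theorem \ref{thm:cohringsfromquotient} shows that $H^*(Q_{i+1};\F_p)$ is determined up to finitely many possibilities by $H^*(Q_i;\F_p)$.

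To invoke Theorem \ref{thm:cohringsfromquotient} at step $i$ I must verify its hypotheses: the kernel $C=G_i/G_{i+1}$ is cyclic of order $p^{k_i}$, and I need $Q_{i+1}$ to have rank bounded by $r$ (true, as it is a section of $G$) together with a subgroup $A$ of some fixed nilpotency class with bounded index. This is exactly where the central placement from Lemma \ref{lem: normalseries}(b) is used: the cyclic kernel lands in $\gamma_l$ of the quotient, so that one can pass to the power subgroup $K=Q_{i+1}^{p^k}$ times that cyclic group and apply Proposition \ref{prop:GpOmega} and Lemma \ref{lem:4central} to produce the required powerful $p^*$-central subgroup with the $\Omega$EP of bounded index. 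The exponent $k$ needed is $(p,c)$-bounded (governed by the condition $k-1\geq \log_p(c+1)$, resp.\ $k-2\geq \log_2(c+1)$ for $p=2$), so the index $|Q_{i+1}:B|$ is $(p,c,r)$-bounded, furnishing the integer $f$. Since at each of the $h\leq cr$ steps the number of possibilities multiplies by a $(p,k,c,r,f)$-bounded factor, and all the parameters $k,c,r,f$ are themselves $(p,c,d)$-bounded, the total number of isomorphism types of $H^*(G;\F_p)$ is $(p,c,d)$-bounded, as claimed.

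The main obstacle I anticipate is the bookkeeping required to guarantee that the hypotheses of Theorem \ref{thm:cohringsfromquotient} hold \emph{uniformly} at every stage of the induction. One must ensure that the nilpotency-class parameter of the distinguished subgroup $A\leq Q_i$ and the index bound $f$ do not deteriorate as $i$ grows along the series of length $h$; this is precisely what Lemma \ref{lem: normalseries}(b) is engineered to deliver, by forcing each successive cyclic kernel into the appropriate term $\gamma_l$ of the lower central series of the current quotient. Verifying that the power exponent $k$ can be chosen once and for all, depending only on $p$ and $c$, and that the resulting subgroup $B$ of bounded index genuinely contains (a conjugate of) the central cyclic kernel so that the nonzero restriction class $\eta$ exists, is the delicate point that makes the inductive step go through.
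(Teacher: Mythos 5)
Your proposal is correct and follows essentially the same route as the paper: take the normal series with cyclic central factors from Lemma \ref{lem: normalseries}, and climb it by reversed induction applying Theorem \ref{thm:cohringsfromquotient} to each central extension $G_i/G_{i+1}\to G/G_{i+1}\to G/G_i$, noting that the rank stays bounded by $r=r(c,d)$ and that the quotient $G/G_i$ itself serves as the subgroup $A$ of class at most $c$ (so $f=1$). The only difference is that you re-derive inside the inductive step some of the machinery (Proposition \ref{prop:GpOmega}, Lemma \ref{lem:4central}) that the paper keeps encapsulated in the black-box statement of Theorem \ref{thm:cohringsfromquotient}.
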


\begin{proof}Let $G$ be a $d$-generated $p$-group of nilpotency class $c$. Let $r$ denote the sectional rank of $G$ that depends on $d$ and $c$ (see Notation in Section \ref{Not: reducedandnilpotentcohomology}). Then, by Lemma \ref{lem: normalseries}, there is a central series,
\[
G=G_0\supseteq G_1\supseteq \dots \supseteq G_h=\{1\},
\]
where $h=h(c,r)\leq cr$ is a finite number (depending only on $c$ and $r$) and the quotients $G_i/G_{i+1}$ are cyclic. Then, for each $i=1,\dots, h-1$, consider the following central extension
\begin{equation}\label{eq: centralextensionofnormalseries}
G_i/G_{i+1}\to G/G_{i+1}\to G/G_i,
\end{equation}
where $G_i/G_{i+1}\cong C_{p^{k_i}}$ is a cyclic group. Note that the rank of $G/G_{i+1}$ is at most  $r$ and that $G/G_{i}$ is a $p$-group of nilpotency class smaller than $c$. 
Then, the theorem easily follows by reversed induction on $i$ and Theorem \ref{thm:cohringsfromquotient}.
\end{proof}

We finish this paper with some comments and further questions. Finite $p$-groups of fixed nilpotency class and finite $p$-groups of fixed coclass share the following property: they all have bounded rank and therefore, they contain a powerful $p$-central subgroup of bounded index. Using  the correspondence with Lie algebras in \cite{WeigelHS}, one can easily show that these $p$-groups contain a powerful $p$-central $p$-subgroup with the $\Omega$EP for $p\geq 5$. It is natural to expect that this also holds for the primes $p=2$ and $p=3$.

\begin{Conj} Let $G$ be a finite $p$-group of fixed rank $r$. Then, there exists a subgroup $K$ of bounded index by a function depending on $p$ and $r$ such that it is powerful $p^*$-central with the $\Omega$EP.
\end{Conj}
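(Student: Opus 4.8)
The plan is to split the problem into two independent reductions: first reduce an arbitrary rank-$r$ group to a powerful one of $(p,r)$-bounded index, and then, \emph{inside} a powerful group, produce a further bounded-index subgroup that is $p^*$-central with the $\Omega$EP by passing to an associated Lie lattice. The first reduction is classical: by the structure theory of powerful $p$-groups (Lubotzky--Mann, see \cite[Ch.\ 2]{DDMS}), a finite $p$-group of rank $r$ contains a powerful characteristic subgroup $P$ whose index is bounded by a function of $p$ and $r$, and $\rk(P)\le r$. Since an index that is $(p,r)$-bounded in $P$ remains $(p,r)$-bounded in $G$, it suffices to find the desired subgroup inside a powerful $p$-group $P$ of rank at most $r$. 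I stress that the power trick of Proposition \ref{prop:GpOmega} is \emph{not} available here: that construction requires $k\ge \log_p(c+1)+1$, so the index $|P:P^{p^k}|$ is governed by the nilpotency class $c$, which for fixed rank is unbounded; hence $P^{p^k}$ may be trivial and a genuinely new idea is needed.

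For $p\ge 5$ the second reduction should go through the correspondence of \cite{WeigelHS} between (saturable) pro-$p$ groups and $\Z_p$-Lie lattices. Concretely, I would lift a bounded-index subgroup of $P$ into the uniform/saturable framework --- realising it via a uniform pro-$p$ group $U$ of dimension $d\le r$ that projects onto it --- so that for $p$ odd it acquires an associated $\Z_p$-Lie lattice $(L,+,[\cdot,\cdot])$ with $U^{p^n}\leftrightarrow p^nL$. On the Lie side the three conditions of Definition \ref{defi: powerfulpcentralomega} become transparent lattice conditions: powerfulness corresponds to $[L,L]\subseteq p^*L$, $p$-centrality to the $p$-torsion of the relevant finite quotient being central, and the $\Omega$EP to the existence of a lattice $L'$ with our object isomorphic to $L'/\Omega_1$. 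The strategy is then to construct the required object directly in $L$ --- for instance by replacing $L$ with $p^jL$ for a bounded $j$ and forming the appropriate finite quotient --- and to transport it back through the correspondence. Because replacing a rank-$d$ lattice by $pL$ multiplies the index by $p^{d}\le p^{r}$ and only boundedly many such steps are needed, the resulting subgroup $K$ has index bounded in terms of $p$ and $r$ alone.

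Assembling the two reductions yields the conjecture for $p\ge 5$; the whole difficulty is concentrated in $p=2$ and $p=3$. The obstacle is exactly the failure of the group--Lie correspondence in the range we need: a uniform group of dimension $r$ can have nilpotency class as large as $r-1$, so Lazard's hypothesis (class $<p$) is already violated once $r\ge p+1$. The saturable refinement repairs this for $p$ odd, but constructing the $p$-central cover $H$ with $K\cong H/\Omega_1(H)$ costs an extra power of $p$ that is comfortable for $p\ge 5$, borderline at $p=3$, and genuinely unavailable at $p=2$, where the governing notion is $4$-centrality controlled by $\Omega_2$ rather than $\Omega_1$ (compare Lemma \ref{lem:4central}). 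Two routes then suggest themselves: extend the saturable correspondence to these primes with the $p^*$/$\Omega_2$ bookkeeping built in, or give a purely group-theoretic construction generalising Proposition \ref{prop:GpOmega} in which the vanishing input $[N,{}_{p^k}G]\le\gamma_{c+1}(G)=1$ is replaced by a rank-based stabilisation of the iterated commutators. I expect this last point --- producing the $\Omega$EP lift without any bound on the class --- to be the main obstacle, since it is precisely where the proof of Proposition \ref{prop:GpOmega} used finiteness of the class in an essential way.
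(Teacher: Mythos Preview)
The statement you are addressing is stated in the paper as a \emph{conjecture}, not as a theorem, and the paper gives no proof. The surrounding text only remarks that for $p\geq 5$ the result can be obtained ``using the correspondence with Lie algebras in \cite{WeigelHS}'' and that ``it is natural to expect that this also holds for the primes $p=2$ and $p=3$.'' So there is no paper's own proof to compare against.

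Your outline is consistent with this: the reduction to a powerful subgroup of $(p,r)$-bounded index is standard, and your plan to handle $p\geq 5$ via the saturable/Weigel Lie correspondence is exactly the route the paper alludes to. You also correctly identify that the argument of Proposition~\ref{prop:GpOmega} is unavailable here because the required exponent $k$ depends on the nilpotency class, which is unbounded for fixed rank, and that the genuine obstruction is at $p\in\{2,3\}$, where the group--Lie dictionary breaks down in the needed range. This diagnosis matches the paper's own assessment. What you have written is therefore not a proof of the conjecture but an informed sketch of why the $p\geq 5$ case should go through and why the small primes remain open --- which is precisely the status the paper assigns to the statement.
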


For $p$ odd, the group $K$ above has a particular type of mod $p$ cohomology algebra and powerful $4^*$-central $2$-groups with the $\Omega$EP should have the same cohomological structure.

\begin{Conj} Let $G$ be a powerful $4^*$-central $2$-group with the $\Omega$EP property and let $d=\rk (\Omega_1(G))$ Then,
\[
H^*(G;\F_2)\cong \Lambda (y_1,\ldots ,y_d)\otimes \F_2[x_1, \dots, x_d].
\]
where the generators $y_i$ have degree one  and the generators $x_i$ have degree two.
\end{Conj}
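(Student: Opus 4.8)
The plan is to prove the sharper statement that the restriction map
\[
\text{res}^G_{\Omega_2(G)}\colon H^*(G;\F_2)\longrightarrow H^*(\Omega_2(G);\F_2)
\]
is an isomorphism of graded $\F_2$-algebras. Since $G$ is $4^*$-central, $A:=\Omega_2(G)=(\Z/4\Z)^d$ is a central homocyclic subgroup with $\rk(\Omega_1(G))=\rk(A)=d$, and its cohomology is exactly the asserted target, $H^*(A;\F_2)\cong\Lambda(\bar y_1,\dots,\bar y_d)\otimes\F_2[\bar x_1,\dots,\bar x_d]$ with $\bar x_i=\beta_2(\bar y_i)$ a higher Bockstein. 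Thus producing such an isomorphism settles the statement; this is the genuine $2$-primary analogue of Theorem \ref{Thm:Weigel}, where for odd $p$ restriction to $\Omega_1(G)=(\Z/p\Z)^d$ already realises the full ring.

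First I would record two structural inputs. By Theorem \ref{Thm:Weigelp2} we have $H^*(G;\F_2)_{\text{red}}\cong\F_2[x_1,\dots,x_d]$, and Weigel's detection argument underlying it identifies this polynomial subalgebra with $H^*(A;\F_2)_{\text{red}}$ through restriction; so I may lift the generators to classes $x_i\in H^2(G;\F_2)$ with $\text{res}^G_A(x_i)=\bar x_i$. Secondly, every elementary abelian $2$-subgroup of the $4^*$-central group $G$ lies in $\Omega_1(G)=(\Z/2\Z)^d$, so the $2$-rank of $G$, equivalently the Krull dimension of $H^*(G;\F_2)$, is $d$, while $\Omega_1(G)\le Z(G)$ forces the central $2$-rank to be $d$ as well. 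Duflot's theorem then gives $\operatorname{depth} H^*(G;\F_2)\ge d$, so $H^*(G;\F_2)$ is Cohen--Macaulay and, the $x_i$ being a homogeneous system of parameters, it is a finitely generated \emph{free} module over $P:=\F_2[x_1,\dots,x_d]$.

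Next I would produce the degree-one classes. For the powerful group $G$ one has $\Phi(G)=G^2$ and $\dim_{\F_2}H^1(G;\F_2)=d(G)=\rk(\Omega_1(G))=d$, the last equality being a feature of powerful $4^*$-central groups with the $\Omega$EP, and restriction $H^1(G;\F_2)\to H^1(A;\F_2)$ is an isomorphism; so I may choose $y_1,\dots,y_d\in H^1(G;\F_2)$ with $\text{res}^G_A(y_i)=\bar y_i$. The crucial use of the $4^*$ hypothesis is to verify $y_i^2=\beta_1(y_i)=0$: because $\Omega_2(G)=(\Z/4\Z)^d$ has no direct factor of order $2$, each homomorphism $y_i\colon G\to\Z/2\Z$ lifts to $G\to\Z/4\Z$, so its first Bockstein vanishes (contrast $\Z/2\Z$, where $y^2\neq0$ produces a polynomial, not an exterior, generator). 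Together with $\text{res}^G_A(x_i)=\bar x_i$ this yields a homomorphism of graded algebras $\psi\colon\Lambda(y_1,\dots,y_d)\otimes\F_2[x_1,\dots,x_d]\to H^*(G;\F_2)$ whose composite with $\text{res}^G_A$ is the standard isomorphism onto $H^*(A;\F_2)$; in particular $\text{res}^G_A$ is surjective and $\psi$ is injective.

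It remains to prove that $\psi$, equivalently $\text{res}^G_A$, is surjective onto $H^*(G;\F_2)$, that is, that $y_1,\dots,y_d,x_1,\dots,x_d$ generate the whole ring, or equivalently that $\ker(\text{res}^G_A)=0$. Using the free $P$-module structure from Duflot's theorem, this reduces to one numerical statement: the fibre $V:=H^*(G;\F_2)\otimes_P\F_2$ has dimension $2^d$. The square-free monomials in the $y_i$ already give $2^d$ linearly independent classes in $V$ (they restrict to a basis of $H^*(A;\F_2)\otimes_{\bar P}\F_2\cong\Lambda(\bar y_1,\dots,\bar y_d)$), so only the reverse inequality $\dim_{\F_2}V\le 2^d$ is needed; by graded Nakayama this forces $\text{res}^G_A$ to be an isomorphism of free $P$-modules and completes the proof. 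I expect \emph{this last step to be the main obstacle}, and it is exactly the content lying beyond the reduced computation of Theorem \ref{Thm:Weigelp2}, and the reason the statement is at present only conjectural: at $p=2$ the degree-two generators arise as \emph{higher} Bocksteins $\beta_2(y_i)$, secondary operations without naive naturality, so the bound cannot simply be transported from $A$. Two routes seem promising. First, exploit the Cohen--Macaulay, conjecturally Poincar\'e-duality, structure of the fibre $V$ via Benson--Carlson duality to pin down $\dim_{\F_2}V$. Second, run Weigel's $\Omega$EP construction at the level of the full ring, analysing the Lyndon--Hochschild--Serre spectral sequence of the central extension $1\to\Omega_1(H)\to H\to G\to1$ afforded by $G=H/\Omega_1(H)$ with $H$ powerful $4$-central, and showing that its transgression yields precisely the $x_i$ with no further differentials or generators, mirroring the odd-primary proof of Theorem \ref{Thm:Weigel}.
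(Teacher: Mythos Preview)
The statement you are addressing is stated in the paper as a \emph{Conjecture}; the paper offers no proof, and indeed presents it as open. There is therefore nothing to compare your attempt against on the paper's side. Your proposal is not a proof but a strategic outline, and you are candid about this: you correctly isolate the missing ingredient as the inequality $\dim_{\F_2}\big(H^*(G;\F_2)\otimes_P\F_2\big)\le 2^d$, which is precisely what separates the reduced statement of Theorem~\ref{Thm:Weigelp2} from the full ring-level conjecture.

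A few of your intermediate steps would also need more argument than you supply before the final obstacle is even reached. The equality $d(G)=\rk(\Omega_1(G))$ and the claim that $\text{res}\colon H^1(G;\F_2)\to H^1(\Omega_2(G);\F_2)$ is an isomorphism are asserted as ``a feature of powerful $4^*$-central groups with the $\Omega$EP'' without proof; they are plausible but not immediate. Likewise, your argument for $y_i^2=\beta_1(y_i)=0$ invokes the structure of $\Omega_2(G)$, but what is actually needed is that the abelian group $G/G^4$ has no direct $\Z/2\Z$-summand, i.e.\ that every $x\in G$ with $x^2\in G^4$ already lies in $G^2$; this is a statement about quotients of $G$, not about the subgroup $\Omega_2(G)$, and deserves a separate verification from the hypotheses. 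None of these points looks fatal, but each is a genuine lemma rather than a remark, and together with the final rank count they constitute the substance of what would have to be done to turn the conjecture into a theorem.
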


In \cite{DGG1}  the authors conjectured that for a given integer $r,$ there only finitely many possibilities for the mod $p$ cohomology algebra of a finite $p$-group of rank $r$.
We expect that the previous two conjectures and proving that the Lydon-Hochschild-Serre spectral sequence arising from the extension of $p$-groups of rank at most $r$,
\[
1\to N\to G\to C_p\to 1
\]
collapses in a bounded number of steps, would prove Conjecture 5.2 in \cite{DGG1}.

\end{document}